\newcommand{\NN}{\mathbb{N}}
\newcommand{\ZZ}{\mathbb{Z}}
\newcommand{\RR}{\mathbb{R}}
\newcommand{\MM}{\mathsf{M}} 
\newcommand{\Le}{\mathsf{L}}
\newcommand{\mm}{\mathsf{m}}
\newcommand{\QQ}{\mathbb{Q}}
\providecommand{\multi}[1]{\llbracket #1 \rrbracket}
\renewcommand\>{\rangle}
\newcommand\<{\langle}
\theoremstyle{plain}
\newtheorem{Theorem}[equation]{Theorem}
\newtheorem{Lemma}[equation]{Lemma}
\theoremstyle{remark}
\newtheorem{Remark}[equation]{Remark}
\theoremstyle{definition}
\newtheorem{Example}[equation]{Example}
\newtheorem{Problem}[equation]{Problem}
\begin{document}
\title[Factorization length distribution for affine semigroups I]{Factorization length distribution for affine \\ semigroups I: numerical semigroups with three generators}

\author{Stephan Ramon Garcia}
\address{Department of Mathematics, Pomona College, 610 N. College Ave., Claremont, CA 91711} 
\email{stephan.garcia@pomona.edu}
\urladdr{\url{http://pages.pomona.edu/~sg064747}}

\author{Christopher O'Neill}
\address{Mathematics Department\\University of California Davis\\Davis, CA 95616}
\email{coneill@math.ucdavis.edu}
\urladdr{https://www.math.ucdavis.edu/~coneill}

\author{Samuel Yih}
\address{Department of Mathematics, Pomona College, 610 N. College Ave., Claremont, CA 91711} 
\email{sy012014@mymail.pomona.edu}

\subjclass[2010]{Primary: 20M14, 05E40.}

\keywords{numerical semigroup; factorization; quasipolynomial; Erd\H{o}s--Straus conjecture; Egyptian fractions}

\thanks{First author partially supported by a David L. Hirsch III and Susan H. Hirsch Research Initiation Grant and the Institute for Pure and Applied Mathematics (IPAM)}

\begin{abstract}
Most factorization invariants in the literature extract extremal factorization behavior, such as the maximum and minimum factorization lengths.   Invariants of intermediate size, such as the mean, median, and mode factorization lengths are more subtle.  We use techniques from analysis and probability to describe the asymptotic behavior of these invariants.  Surprisingly, the asymptotic median factorization length is described by a number that is usually irrational.
\end{abstract}

\maketitle

\section{Introduction}
\label{sec:intro}

Let $\NN$ denote the set of nonnegative integers.  A \emph{numerical semigroup} is an additive subsemigroup $S \subset \NN$ (that is, a subset that is closed under addition).  Any numerical semigroup can be generated by finitely many integers $n_1, n_2,\ldots, n_k$, so we usually specify a numerical semigroup $S$ by writing
\begin{equation*}
S = \<n_1, \ldots, n_k\> = \{a_1n_1 + \cdots + a_kn_k : a_1, \ldots, a_k \in \NN\}.
\end{equation*}
In this paper, we assume $n_1 < \cdots < n_k$ and $S$ has finite complement in~$\NN$ (or, equivalently, that $\gcd(n_1, \ldots, n_k) = 1$), but we do \emph{not} assume $n_1, \ldots, n_k$ minimally generate~$S$.  For an introduction to numerical semigroups, we recommend~\cite{NSBook}.  

A \emph{factorization} of an element $n \in S$ is an expression 
\begin{equation*}
n = a_1n_1 + \cdots + a_kn_k
\end{equation*}
of $n$ as a sum of generators of $S$, which we often represent with the $k$-tuple $a = (a_1, a_2, \ldots, a_k) \in \NN^k$.  The set of all factorizations of $n$ is denoted $\mathsf Z_S(n) \subset \NN^k$. 
The \emph{length} of a factorization $a \in \mathsf Z_S(n)$ is the number $|a| = a_1 + \cdots + a_k$ of generators appearing in the sum, and the \emph{length set} of $n$ is
\begin{equation*}
\mathsf L_S(n) = \{|a| : a \in \mathsf Z_S(n)\}
\end{equation*}
of distinct factorization lengths of $n \in S$.  

Much of the factorization theory literature centers around \emph{factorization invariants}, which are discrete quantities used to classify and quantify the underlying factorization structure.  The study of factorization invariants is a thriving area; see~\cite{nonuniq,omegamonthly,factornumerical,factorizationtheoryproceedings} for an overview.  Two such invariants include $\MM(n) = \max \mathsf L(n)$ (the \emph{maximum factorization length} of~$n$), and $\mm(n) = \min \mathsf L(n)$ (the \emph{minimum factorization length} of~$n$).  The asymptotic behavior of these two invariants, studied in \cite{asymptoticlengthfunctions,minmaxquasi}, is characterized as follows.  

\begin{Theorem}[\!\!{\cite[Theorems~4.2 and~4.3]{minmaxquasi}}]\label{t:minmaxquasi}
If~$S = \<n_1, n_2, \ldots, n_k\>$, then for large $n \in S$,
\begin{equation}\label{eq:minmaxquasi}
\MM(n) = \tfrac{1}{n_1}n + c_0(n) 
\qquad\text{and}\qquad
\mm(n) = \tfrac{1}{n_k}n + c_0'(n),
\end{equation} 
where $c_0(n)$ and $c_0'(n)$ are rational-valued $n_1$- and $n_k$-periodic functions, respectively.
\end{Theorem}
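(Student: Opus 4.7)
My plan is to prove the formula for $\MM$; the formula for $\mm$ follows by a symmetric argument with $n_k$ and $\Ap(S;n_k)$ in place of $n_1$ and $\Ap(S;n_1)$. The guiding heuristic is that a maximum-length factorization wants to use the smallest generator $n_1$ as many times as possible, which gives the leading term $n/n_1$; the correction $c_0(n)$ should then depend only on $n\bmod n_1$.

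The central step is to establish, for all sufficiently large $n\in S$, the recurrence
\[
\MM(n+n_1)=\MM(n)+1.
\]
The inequality $\MM(n+n_1)\geq\MM(n)+1$ is immediate: extend a length-$\MM(n)$ factorization $a$ of $n$ by one more $n_1$. For the reverse, take a length-$\MM(n+n_1)$ factorization $a'$ of $n+n_1$; if $a'_1\geq 1$, subtracting one $n_1$ gives a factorization of $n$ of length $|a'|-1$, yielding $\MM(n)\geq\MM(n+n_1)-1$. The whole problem therefore reduces to ruling out $a'_1=0$ for $n$ large.

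To do this I combine a trivial upper bound coming from $a'_1=0$ with a structural lower bound from the Apéry set. If $a'_1=0$ then $n+n_1=\sum_{i\geq 2}a'_in_i\geq n_2|a'|$, so $|a'|\leq (n+n_1)/n_2$. On the other hand, let $r\equiv n\pmod{n_1}$ and let $w_r$ denote the unique element of $\Ap(S;n_1)$ with $w_r\equiv r\pmod{n_1}$. Because $w_r-n_1\notin S$, every factorization of $w_r$ must avoid $n_1$; fix one such, $w_r=b_2n_2+\cdots+b_kn_k$. Then for $n$ large one has $n+n_1=w_r+qn_1$ with $q\geq 0$, producing the factorization $(q,b_2,\ldots,b_k)$ and hence $\MM(n+n_1)\geq q=(n+n_1-w_r)/n_1$. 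The inequality $(n+n_1-w_r)/n_1>(n+n_1)/n_2$ rearranges to $n+n_1>w_rn_2/(n_2-n_1)$, which holds uniformly in $r$ once $n$ exceeds $Wn_2/(n_2-n_1)$ for $W=\max_r w_r$. Past that threshold, $a'_1\geq 1$ and the recurrence is proved.

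The conclusion then falls out formally. Define $c_0(n):=\MM(n)-n/n_1$. The recurrence gives $c_0(n+n_1)=c_0(n)$ for all large $n$; explicitly, $c_0(n)=\MM(w_r)-w_r/n_1\in\QQ$ when $n\equiv r\pmod{n_1}$, so $c_0$ agrees with a rational-valued $n_1$-periodic function for $n$ large. The $\mm$ argument proceeds identically with $n_1$ and $n_k$ exchanged: one shows that if the min-length factorization of $n+n_k$ had no copy of $n_k$ then $|a'|\geq(n+n_k)/n_{k-1}$, which eventually beats the explicit upper bound $\mm(n+n_k)\leq\mm(w'_r)+(n+n_k-w'_r)/n_k$ obtained from $\Ap(S;n_k)$. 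The main obstacle in both cases is the large-$n$ threshold step: it relies on the strict gap between $1/n_1$ and $1/n_2$ (respectively $1/n_{k-1}$ and $1/n_k$) together with a uniform bound on the Apéry elements. Once that is handled, periodicity and rationality are immediate.
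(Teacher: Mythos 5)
The paper never proves this statement---it is imported verbatim from \cite{minmaxquasi} (Theorems~4.2 and~4.3)---so there is no internal proof to compare against. Judged on its own, your argument is sound and is essentially the standard one: the recurrence $\MM(n+n_1)=\MM(n)+1$ for all large $n\in S$, obtained by showing that every maximum-length factorization of $n+n_1$ must use $n_1$ (a factorization avoiding $n_1$ has length at most $(n+n_1)/n_2$, which is eventually beaten by the length $(n+n_1-w_r)/n_1$ realized through the Ap\'ery element $w_r\in\Ap(S;n_1)$, uniformly over the finitely many residues $r$), immediately gives eventual $n_1$-periodicity of $\MM(n)-n/n_1$, and rationality is automatic because $\MM(n)\in\ZZ$. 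The symmetric treatment of $\mm$ via $n_k$, $n_{k-1}$, and $\Ap(S;n_k)$ works for the same reason, using $1/n_{k-1}>1/n_k$.

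One parenthetical claim is false, though harmlessly so: the explicit formula $c_0(n)=\MM(w_r)-w_r/n_1$ need not hold, because your recurrence is only established past a threshold, and the chain from $w_r$ up to a large $n$ passes through the region where $\MM$ can jump by more than $1$. For $S=\<4,5,11\>$ one has $w_3=11$ and $\MM(15)=3\neq \MM(11)+1=2$ (since $15=5+5+5$), and for large $n\equiv 3\pmod 4$ one computes $\MM(n)=n/4-3/4$, whereas $\MM(11)+(n-11)/4=n/4-7/4$. The correct identification is $c_0(n)=\MM(n_r)-n_r/n_1$ with $n_r$ the smallest element of $S$ in the residue class of $n$ beyond your threshold; since eventual periodicity and rationality already follow from the recurrence together with $\MM(n)\in\ZZ$, the theorem is unaffected---just repair or delete that sentence (and note the trivial case $k=1$, where $n_2$ does not exist but $S=\NN$ and the statement is immediate).
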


One of the crowning achievements in factorization theory is the following \emph{structure theorem for sets of length}.  As a consequence, most invariants derived from factorization length focus on extremal lengths (e.g., $\MM(n)$ or $\mm(n)$) 
since this is where the ``interesting'' behavior occurs. 

\begin{Theorem}[\!\!{\cite[Theorem~4.3.6]{nonuniq}}]\label{t:lstructure}
Let $S = \<n_1, n_2, \ldots, n_k\>$ be a numerical semigroup.  There is an integer $M > 0$ such that for all $n \in S$, the length set $\mathsf L_S(n)$ equals an arithmetic sequence from which some subset of the first and last $M$ elements are removed.  
\end{Theorem}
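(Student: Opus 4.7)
The plan is to identify a universal common difference $d$ for the eventual arithmetic progression of lengths and then use a trading argument to show that every lattice point of the progression strictly between $\mm(n)$ and $\MM(n)$ is realized in $\mathsf L_S(n)$, except possibly within a bounded distance of the endpoints.

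First I would isolate $d$ from the lattice of relations. Consider
\[
L = \{c \in \ZZ^k : c_1 n_1 + \cdots + c_k n_k = 0\},
\]
and let $d \geq 1$ be the positive generator of the subgroup $\{c_1 + \cdots + c_k : c \in L\} \subseteq \ZZ$; this subgroup is nonzero because $(n_k, 0, \ldots, 0, -n_1) \in L$ has length sum $n_k - n_1 > 0$. Since any two factorizations $a, b \in \mathsf Z_S(n)$ satisfy $a - b \in L$, every length difference $|a| - |b|$ lies in $d\ZZ$, and thus $\mathsf L_S(n)$ sits inside a single coset of $d\ZZ$. This is the target arithmetic sequence.

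Next I would set up a uniform trading toolkit. Fix trades $c^+, c^- \in L$ with $\sum_i c^+_i = d$ and $\sum_i c^-_i = -d$, and let $N$ exceed the largest absolute value of any entry appearing in $c^+$ or $c^-$. Whenever a factorization $a \in \mathsf Z_S(n)$ satisfies $a_i \geq N$ for every $i$, both $a + c^+$ and $a + c^-$ remain in $\NN^k$, producing factorizations of $n$ with lengths $|a| + d$ and $|a| - d$. Iterating: once a single factorization of $n$ with all coordinates $\geq N$ is located, all lengths in its residue class modulo $d$ within a linear-size window around it belong to $\mathsf L_S(n)$. A concrete way to produce such ``thick'' factorizations is to add a fixed factorization of some large multiple of $\lcm(n_1, \ldots, n_k)$, which inflates every coordinate uniformly; combined with Theorem~\ref{t:minmaxquasi}, which guarantees that the length window $[\mm(n), \MM(n)]$ has width $\Theta(n)$, this fills the entire interior of the arithmetic progression.

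The main obstacle is the boundary analysis: one must show that the number of gaps near each endpoint is bounded by a constant $M$ independent of $n$. Unlike the interior, factorizations realizing extremal or near-extremal lengths may be forced to have some small coordinates (indeed, for $\mm(n)$ this is generic, since the minimum length is attained by loading weight onto the largest generator), so the naive trading argument breaks down. The remedy is a finite case analysis: at distance more than $M$ from $\mm(n)$ or $\MM(n)$ one can either locate a ``balanced'' factorization directly or trade inward from an interior witness established in the previous step. Making the margin $M$ uniform in $n$, rather than allowing it to grow with $n$, is the heart of the theorem and the most delicate piece of the argument; it ultimately relies on the finiteness of small-coordinate configurations that can appear within any bounded window of the length set.
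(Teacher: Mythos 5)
First, a point of reference: the paper does not prove this statement at all --- it is quoted verbatim from the literature (\cite[Theorem~4.3.6]{nonuniq}), so there is no in-paper argument to compare against; your proposal has to stand on its own. Your opening step is fine: defining $d$ as the generator of the image of the relation lattice under the coordinate-sum map does give $d = \delta$, and it correctly places $\mathsf L_S(n)$ inside one coset of $d\ZZ$. The gap is in everything after that. The content of the theorem is that \emph{every} element of the progression lying more than a bounded distance from $\mm(n)$ and $\MM(n)$ is actually attained, with the bound uniform in $n$. Your interior argument does not deliver this: iterating the two fixed vectors $c^{\pm}$ from a single ``thick'' witness only fills a window whose radius is governed by the witness's coordinates in the positions where $c^{\pm}$ are negative, and it provably cannot reach lengths that are close (but not $O(1)$-close) to the extremes. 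Indeed, if $\MM(n) - \ell = j$, then any factorization of length $\ell$ satisfies $\sum_{i \ge 2} a_i (n_i - n_1) = jn_1 + O(1)$, so all coordinates except the first are $O(j)$; a factorization reached as $a + t\,c^{+}$ from a balanced $a$ with all coordinates $\Theta(n)$ cannot have this shape in general (e.g.\ in $\<6,9,20\>$ with $c^{+} = (3,-2,0)$ the third coordinate never moves, so lengths near $\MM(n)$ are unreachable from any witness with $a_3$ large). So the claim that this ``fills the entire interior'' is false as stated, and the region at distance between a constant and a constant fraction of $n$ from each endpoint --- which is exactly where the theorem has content --- is left uncovered. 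Your boundary paragraph then defers precisely this to ``a finite case analysis'' resting on ``finiteness of small-coordinate configurations,'' which is an assertion of the uniform bound $M$, not a proof of it.

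A workable repair stays elementary but trades from the \emph{extremal} factorizations rather than from a balanced one. A factorization of length $\MM(n)$ has $a_1 = \MM(n) - O(1)$ and all other coordinates $O(1)$. The trades $n_1 e_j - n_j e_1$ ($j \ge 2$) decrease length by $n_j - n_1$, and $\gcd_j (n_j - n_1) = \delta$, so the numerical semigroup generated by $(n_j - n_1)/\delta$ has finite Frobenius number $F_0$; consequently every decrement $m\delta$ with $m > F_0$ can be written as $\sum_j x_j (n_j - n_1)$, and by using the $j = k$ trade for the bulk and bounded corrections for the residue one keeps the consumption $\sum_j x_j n_j$ below $a_1$ whenever $\mm(n) + C \le \MM(n) - m\delta$ for a constant $C$ depending only on $S$. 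This realizes every admissible length in $[\mm(n) + C, \MM(n) - C]$ and yields the uniform $M$. Without an argument of this kind (or the machinery behind \cite[Theorem~4.3.6]{nonuniq}), your proposal establishes only that $\mathsf L_S(n)$ lies in an arithmetic progression of difference $\delta$, not the structure theorem itself.
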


In contrast, the goal of this paper is to initiate the study of factorizations of ``medium'' length.  To do so, we consider the \emph{length multiset} $\Le\multi{n}$ of $n$, by which we mean the multiset of factorization lengths in $\mathsf Z_S(n)$.
We focus specifically on the following factorization invariants.  

\begin{enumerate}[leftmargin=*]
\addtolength{\itemsep}{5pt}
\item 
The \emph{mean factorization length} $\mu(n)$ is the average factorization length of $n$:
\begin{equation*}
    \mu(n) = \frac{1}{|\Le \multi{n}|} \sum_{\ell \in \Le \multi{n}} \ell. 
\end{equation*}

\item 
The \emph{median factorization length} $\eta(n)$ is the median of $\Le \multi{n}$.

\item 
The \emph{mode frequency} $\nu(n)$ is the highest multiplicity among lengths in $\Le\multi{n}$.  Lengths with multiplicity $\nu(n)$ comprise the set $\gamma(n)$ of \emph{mode factorization lengths}.  

\end{enumerate}

Many results in the literature involving factorization invariants are asymptotic in nature \cite{FH06, GH92, CGGR02, asymptoticfactorizations}, in part because invariant behavior can be chaotic for ``small'' semigroup elements.  For instance, every multiset with elements from $\ZZ_{\ge 2}$ occurs as the length multiset of some squarefree numerical semigroup element \cite{lengthsetrealization}.  For some families of semigroups, including numerical semigroups, asymptotic results take the form of an \emph{eventually quasipolynomial} description like the one in Theorem~\ref{t:minmaxquasi} above \cite{OP14, GMV15, FIHF, minmaxquasi, dynamicalg, deltasetperiodic, catenarydegreeperiodic,0norm}.  

We seek asymptotic results for the mean, median, and mode factorization lengths of $n$ in the spirit of Theorem~\ref{t:minmaxquasi}.  Although we focus on $3$-generated numerical semigroups (as is common in the literature \cite{GLM17, GLM18, CGTV16, CPS14, AG10, RG04}), getting a hold of these intermediate factorization invariants is subtle enough to require several analytic and probabilistic techniques not standard in the factorization theory literature.  Moreover, we provide several examples which illustrate that the situation for four or more generators is substantially more complicated (Example~\ref{e:embdim4}).  

Central to the study of factorization invariants is the notion of a trade, which encodes a relation between semigroup generators.  More precisely, a \emph{trade} of a numerical semigroup $S$ is a pair $(z \mid z')$ of factorizations $z, z' \in \mathsf Z(n)$ for some element $n \in S$. A trade is \emph{length preserving} if $|z| = |z'|$.  

If $S = \<n_1 ,n_2 ,n_3\>$, then there is a unique trade of the form 
$$(a, 0, c \mid 0, b, 0)$$
with $a + c = b$ such that given a factorization of $n$ of length~$\ell$, all other factorizations of $n$ with the same length $\ell$ are obtained by repeatedly performing this trade~\cite[Theorem~1.3]{ENS}.  Phrased in the language of trades, we say $S$ has exactly one minimal length-preserving trade.  In this case, we refer to the element
\begin{equation*} 
t = an_1 + cn_3 = bn_2
\end{equation*}
as the \emph{trade element} of $S$.  Note that the same does not hold for numerical semigroups with more than three generators~\cite[Example~1.4]{ENS}.  

\begin{Example}\label{example:mcnugget}
The \emph{McNugget semigroup}~\cite{recreations} is given by $S = \<6, 9, 20\>$.  Its minimal length-preserving trade is $(11,0,3 \mid 0,14,0)$, making its trade element 
\begin{equation*}
t = 11 \cdot 6 + 3 \cdot 20 = 14 \cdot 9 = 126.
\end{equation*}
In~this semigroup, 
\begin{align*}
\mathsf Z(132) 
& = \{(2, 0, 6), (0, 8, 3), (3, 6, 3), (6, 4, 3), (9, 2, 3), (12, 0, 3), (1, 14, 0),\\
& \phantom{{}= \{} (4, 12, 0), (7, 10, 0), (10, 8, 0), (13, 6, 0), (16, 4, 0), (19, 2, 0), (22, 0, 0)\}
\end{align*}
and 
\begin{equation*}
\Le \multi{132} = \{\!\!\{8, 11, 12, 13, 14, 15, 15, 16, 17, 18, 19, 20, 21, 22\}\!\!\}.
\end{equation*}
As guaranteed by Theorem~\ref{t:lstructure}, $\Le_S(132)$ forms an arithmetic progression from $\mm(132) = 8$ to $\MM(132) = 22$ with step size $1$ and elements $9$ and $10$ omitted.  The mean factorization and median factorization lengths of $132$ are
\begin{equation*}
\mu(132) = 221/14 \approx 15.7857
\qquad\text{and}\qquad
\eta(n) =  \tfrac{1}{2}(15+16) = 31/2,
\end{equation*}
respectively.
Observe that $15$ occurs twice in $\Le \multi{132}$ because $132$ has two distinct factorizations of length $15$, namely $(1,14,0)$ and $(12,0,3)$.  No other factorization length appears this often,
so the set of mode factorization lengths is a singleton.  In particular,
\begin{equation*}
\gamma(132) = \{15\}
\qquad \text{and} \qquad
\nu(132) = 2.
\end{equation*}
On the other hand, $1001 \in S$ has $5$ factorizations of length $87$, namely
\begin{equation*}
(8, 57, 22), (19, 43, 25), (30, 29, 28), (41, 15, 31), (52, 1, 34) \in \mathsf Z(1001).
\end{equation*}
Each factorization is obtained from $(8, 57, 22)$ by repeatedly trading fourteen copies of $9$ for eleven copies of $6$ and three copies of $20$.  The mode frequency of $1001$ is $\nu(1001) = 8$, which is the number of factorizations of $1001$ of each length in 
\begin{equation*}
\gamma(1001) = \{107, 108, 110, 111, 112, 113, 114, 115\} \subset \mathsf L(1001).
\end{equation*}
\end{Example}

This paper is devoted to the proofs of three theorems, which describe the limiting behavior of the mode length and frequency (Theorem~\ref{t:mode}), the mean factorization length (Theorem~\ref{t:mean}), and the median factorization length (Theorem~\ref{t:median}).  We state and discuss each theorem here.

\begin{Theorem}[Mode length and frequency]\label{t:mode}
Fix a numerical semigroup $S = \<n_1, n_2, n_3\>$ and let $t$ be the trade element of $S$. 
For each $n \in S$, 
\begin{equation*}\label{eq:freqquasi}
\nu(n) = \tfrac{1}{t}n + c_0(n),
\end{equation*}
in which $c_0(n)$ is a rational-valued periodic function with period $t$.  Moreover, 
\begin{equation*}\label{eq:modequasi}
\gamma(n+t) = \gamma(n) + t/n_2 = \{m + t/n_2: m \in \gamma(n)\}
\end{equation*}
for all $n \in S$.  
\end{Theorem}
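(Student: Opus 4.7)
The plan is to parametrize the factorizations of $n$ of a given length $\ell$, count them explicitly, and then analyze how the count changes under the shift $n \mapsto n + t$. Writing $\alpha = n_2 - n_1$ and $\beta = n_3 - n_2$, the minimal trade relation $a\alpha = c\beta$ with $\gcd(a,c)=1$ yields $b = (n_3 - n_1)/\gcd(\alpha, \beta)$ and $t = b n_2$. Given $z = (z_1, z_2, z_3) \in \mathsf Z(n)$ with $|z| = \ell$, the identities $z_1 + z_2 + z_3 = \ell$ and $z_1 n_1 + z_2 n_2 + z_3 n_3 = n$ determine $z_1$ and $z_3$ as affine functions of $z_2$, and non-negativity reduces to
\begin{equation*}
0 \leq z_2 \leq U(\ell, n) := \min\!\left(\tfrac{\ell n_3 - n}{\beta},\ \tfrac{n - \ell n_1}{\alpha}\right).
\end{equation*}
Integrality of $z_1, z_3$ forces $z_2$ to lie in an arithmetic progression with common difference $b$; indeed, the length-preserving trade $(a,0,c \mid 0,b,0)$ advances $z_2$ by exactly $b$. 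Writing $\nu_\ell(n) := |\{z \in \mathsf Z(n) : |z| = \ell\}|$, we see that $\nu_\ell(n)$ equals the number of terms of this progression lying in $[0, U(\ell, n)]$, and $\nu(n) = \max_\ell \nu_\ell(n)$.

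As a function of $\ell$, the map $U(\cdot, n)$ is a piecewise-linear ``tent'' whose two affine pieces meet at $\ell = n/n_2$ with peak value $n/n_2$. Using $t = n_2 b$, one verifies $b n_3 - t = b\beta$ and $t - b n_1 = b\alpha$, yielding the key identity
\begin{equation*}
U(\ell + b,\, n + t) = U(\ell, n) + b,
\end{equation*}
and the same substitution shows that the residue of $z_2$ modulo $b$ is preserved under $(\ell, n) \mapsto (\ell + b, n + t)$. Consequently, whenever $\nu_\ell(n) \geq 1$, the admissible progression for $(\ell + b, n + t)$ is the one for $(\ell, n)$ with a single extra term appended at the top, giving the recurrence
\begin{equation*}
\nu_{\ell + b}(n + t) = \nu_\ell(n) + 1.
\end{equation*}
For any ``new'' length $\ell'$ of $n+t$ (i.e., satisfying $\nu_{\ell'}(n+t) \geq 1$ but $\nu_{\ell' - b}(n) = 0$), the same analysis gives $U(\ell', n+t) < b$, so the progression contains at most one term in $[0, U(\ell', n+t)]$ and hence $\nu_{\ell'}(n+t) = 1$.

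Both assertions of the theorem now follow. For the mode statement, each length $\ell \in \gamma(n)$ shifts to a length $\ell + b$ of $n + t$ with multiplicity $\nu(n) + 1 \geq 2$, while the ``new'' lengths carry only multiplicity $1$ and so cannot be modal; hence $\gamma(n + t) = \gamma(n) + b = \gamma(n) + t/n_2$. Maximizing over $\ell$ gives $\nu(n + t) = \nu(n) + 1$, so $c_0(n) := \nu(n) - n/t$ is $t$-periodic on $S$, and rationality is automatic since $\nu(n) \in \NN$ and $n/t \in \tfrac{1}{t}\ZZ$. The main technical obstacle is the bookkeeping: verifying $U(\ell + b, n + t) = U(\ell, n) + b$ while simultaneously tracking the modular condition on $z_2$, and then ruling out modality of the ``new'' lengths. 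Since the theorem applies to all $n \in S$ rather than only large $n$, one cannot appeal to asymptotic arguments, but the explicit parametrization keeps the analysis elementary.
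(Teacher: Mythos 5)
Your proof is correct, and it rests on the same structural fact that drives the paper's argument: for fixed length $\ell$, the factorizations of $n$ are parametrized by the middle coordinate $z_2$, which runs through an arithmetic progression of step $b=t/n_2$, and replacing $(n,\ell)$ by $(n+t,\ell+b)$ extends that progression by exactly one term. The implementations differ enough to be worth contrasting. The paper anchors the enumeration at the unique length-$\ell$ factorization with maximal second coordinate (invoking the ENS theorem that the single minimal length-preserving trade connects all same-length factorizations) and exhibits explicit maps in both directions, so it never needs to discuss lengths of $n+t$ that do not come from lengths of $n$: shifting a mode length of $n+t$ down by $b$ already produces $\nu(n+t)-1$ factorizations of $n$. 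You instead count directly, via the window $0\le z_2\le U(\ell,n)$ and a congruence modulo $b$, which buys the stronger per-length recurrence $\nu_{\ell+b}(n+t)=\nu_{\ell}(n)+1$ together with the statement that genuinely new lengths of $n+t$ have multiplicity $1$; this is a clean, slightly more quantitative picture, and your tent function $U$ is essentially the bound the paper only derives later, in Lemma~\ref{l:upperbound}. (The claim that integrality pins $z_2$ to a single residue class modulo $b$ deserves a line: it follows from $\gcd(n_3-n_2,\,n_3-n_1)=\delta$ and $b=(n_3-n_1)/\delta$, or alternatively from the ENS connectivity result.) One local misstatement: for a new length $\ell'$ it need not be true that $U(\ell',n+t)<b$. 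What your hypotheses actually give is that the least admissible residue $r\in[0,b)$ of $z_2$ satisfies $r>U(\ell'-b,n)$ (note that solvability of the congruence is preserved under the shift, so $r$ exists), hence $U(\ell',n+t)=U(\ell'-b,n)+b<r+b$, and the window $[0,U(\ell',n+t)]$ contains at most the single term $z_2=r$; the conclusion $\nu_{\ell'}(n+t)=1$ stands, so this is a cosmetic slip rather than a gap.
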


The proof of Theorem~\ref{t:mode} is contained in Section~\ref{sec:mode}.  Although its proof is a relatively straightforward combinatorial one, we leverage it and several analytic and probabilistic techniques to obtain the more technical results below.  

\begin{Theorem}[Mean factorization length]\label{t:mean}
For any numerical semigroup $S = \<n_1, n_2, n_3\>$,
\begin{equation}\label{eq:meanasymptotic}
\lim_{n \to \infty} \frac{\mu(n)}{n} = \frac{1}{3} \left( \frac{1}{n_1} + \frac{1}{n_2} + \frac{1}{n_3} \right ).
\end{equation}
\end{Theorem}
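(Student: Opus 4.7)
The plan is to express the mean as a ratio $\mu(n) = L(n)/Z(n)$, where $Z(n) = |\mathsf Z_S(n)|$ is the number of factorizations of $n$ and $L(n) = \sum_{a \in \mathsf Z_S(n)} |a|$ is the sum of their lengths, and then to extract the leading-order asymptotics of both. Geometrically, $\mathsf Z_S(n)$ is the set of $\ZZ^3$ lattice points on the triangle
$$T_n = \bigl\{x \in \RR_{\geq 0}^3 : n_1 x_1 + n_2 x_2 + n_3 x_3 = n\bigr\},$$
which dilates linearly in $n$, and the right-hand side of \eqref{eq:meanasymptotic} is precisely $\tfrac{1}{n}(x_1+x_2+x_3)$ evaluated at the centroid of $T_n$. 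Thus, morally, the theorem asserts that the lattice points of $T_n$ equidistribute enough for the mean of $|a|$ to match the continuous mean on $T_n$.

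To make this rigorous, I would package $Z$ and $L$ into the bivariate rational generating function
$$F(z, w) = \prod_{i=1}^3 \frac{1}{1 - w z^{n_i}},$$
whose coefficient of $z^n w^\ell$ counts factorizations of $n$ of length $\ell$. Then $\sum_n Z(n)\, z^n = F(z,1)$, while differentiating in $w$ at $w = 1$ gives
$$\sum_n L(n)\, z^n \;=\; \sum_{i=1}^3 \frac{z^{n_i}}{(1 - z^{n_i})^2} \prod_{j \neq i} \frac{1}{1 - z^{n_j}}.$$
Using $1 - z^{n_i} \sim n_i(1-z)$ as $z \to 1$, the pole at $z = 1$ has order $3$ with leading coefficient $(n_1 n_2 n_3)^{-1}$ for the $Z$-series, and order $4$ with leading coefficient $\bigl(\tfrac{1}{n_1}+\tfrac{1}{n_2}+\tfrac{1}{n_3}\bigr)(n_1 n_2 n_3)^{-1}$ for the $L$-series.

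Because $\gcd(n_1,n_2,n_3) = 1$, no root of unity $\zeta \neq 1$ annihilates all three factors $1 - z^{n_i}$, so every other pole of $F(z,1)$ has order at most $2$ and every other pole of $\partial_w F(z,1)$ has order at most $3$. Standard singularity / partial-fraction analysis then yields
$$Z(n) = \frac{n^2}{2\, n_1 n_2 n_3} + O(n) \qquad \text{and} \qquad L(n) = \frac{n^3}{6\, n_1 n_2 n_3}\Bigl(\tfrac{1}{n_1}+\tfrac{1}{n_2}+\tfrac{1}{n_3}\Bigr) + O(n^2),$$
from which dividing and sending $n \to \infty$ produces \eqref{eq:meanasymptotic}. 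The main technical step is controlling the contributions of the subdominant poles; this reduces to bookkeeping, since the order of the pole at a primitive $d$-th root of unity equals the number of $n_i$ divisible by $d$, which coprimality forces to be strictly less than $3$ and hence strictly less than the order at $z=1$ in both series.
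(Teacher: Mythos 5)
Your proposal is correct, but it follows a genuinely different route from the paper. The paper never touches generating functions: it uses the unique minimal length-preserving trade of a $3$-generated semigroup to bound the length-multiplicity function of the special elements $ks$ (with $s = \delta t n_1 n_2 n_3$) between a piecewise-linear upper bound and a nearby lower bound, shows the normalized multiplicity profile converges uniformly to a triangular distribution on $[0,1]$ with peak at the fulcrum constant $F$, reads off the mean $(1+F)/3$ of that distribution, and then transfers the limit from the subsequence $ks$ to all of $S$ via a separate comparison lemma. Your argument instead writes $\mu(n) = L(n)/Z(n)$ with $Z(n) = |\mathsf Z_S(n)|$ and $L(n) = \sum_{a \in \mathsf Z_S(n)} |a|$, extracts both asymptotics from the rational functions $F(z,1)$ and $\partial_w F(z,w)\big|_{w=1}$ by comparing pole orders at $z=1$ versus the other roots of unity, and divides; all the stated leading coefficients and error orders check out, and since $\gcd(n_1,n_2,n_3)=1$ forces every other pole to have strictly smaller order than the pole at $z=1$ in both series, the subdominant contributions are indeed $O(n)$ and $O(n^2)$ respectively. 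One small imprecision: for the length series the pole order at a primitive $d$-th root of unity is not simply the number of $n_i$ divisible by $d$ (one factor appears squared, so it can be one larger), but your earlier, correct bound of at most $3 < 4$ is what the argument actually uses, so nothing breaks. The trade-offs are worth noting: your method is shorter, needs no subsequence-transfer lemma, and generalizes verbatim to $k$ generators to give $\lim_n \mu(n)/n = \frac{1}{k}\sum_{i} \frac{1}{n_i}$, whereas the paper's trade-based construction is intrinsically tied to three generators; on the other hand, the paper's approach produces the full limiting (triangular) length distribution, which is exactly what is needed for the companion results on the mode and the median, quantities your generating-function computation does not reach.
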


Theorem~\ref{t:mean} states that $\mu(n)$ grows asymptotically like $n$ times the reciprocal of the harmonic mean of the generators of $S$.  The largest value attained by~\eqref{eq:meanasymptotic} is $47/180 = 0.26\overline{1}$, which occurs for $S = \< 3,4,5\>$.
On the other hand, \eqref{eq:meanasymptotic} can be made arbitrarily small by selecting $n_1$ large enough.  This prompts the following.

\begin{Problem}\label{prob:erdos}
Which rational values in $(0,\frac{47}{180})$ are of the form~\eqref{eq:meanasymptotic}
for some numerical semigroup $S = \< n_1,n_2,n_3 \>$?
\end{Problem}

\begin{Remark}
An \emph{Egyptian fraction} is a finite sum of distinct rational numbers, each with numerator $1$ (historically the ancient Egyptians also permitted $2/3$ and $3/4$, for which they had special symbols).  Every positive rational number can be expressed as an Egyptian fraction, although these representations are not unique~\cite[Section~D11]{Guy}.  There are many rational numbers that cannot be expressed as Egyptian fractions with three or fewer terms.  For example,
\begin{equation*}
\frac{8}{11} = \frac{1}{2} + \frac{1}{5} + \frac{1}{37} + \frac{1}{4070}
\end{equation*}
has sixteen decompositions of length $4$, but none of length $3$.  Thus, there does not exist a numerical semigroup $S = \<n_1, n_2, n_3\>$ so that~\eqref{eq:meanasymptotic}
equals $\frac{8}{33} = 0.\overline{24}$.   There are other ``forbidden'' values of
\eqref{eq:meanasymptotic}, such as
\begin{equation*}
\frac{8}{51} \approx 0.156863 ,\qquad
\frac{3}{19} \approx 0.157895,\quad\text{and}\quad
\frac{14}{57} \approx 0.245614,
\end{equation*}
since none of $8/17$, $9/19$, and $14/19$ can be expressed as a sum of three unit fractions
\cite{Knott}. These sorts of problems are known to be extremely difficult.
The famed Erd\H{o}s--Straus conjecture 
asserts that for each $n \geq 2$,
\begin{equation*}
\frac{4}{n} = \frac{1}{n_1} + \frac{1}{n_2} + \frac{1}{n_3}
\end{equation*}
has a solution in nonnegative integers~\cite{Erdos, Tao, Tao2, Elsholtz, Graham, Vaughan}.
The numerator $4$ is replaced by $5$ in a closely-related conjecture of Sierpi\'nski~\cite{Sierpinski}.  Another
conjecture along these lines is due to Schinzel~\cite{Schinzel}: for each $k \in \NN$, the equation
\begin{equation*}
\frac{k}{n} = \frac{1}{n_1} + \frac{1}{n_2} + \frac{1}{n_3}
\end{equation*}
has a solution in nonnegative integers if $n$ is sufficiently large.  All of these conjectures
remain unresolved.  
\end{Remark}

In a striking departure from other factorization invariants, which typically coincide with rational-valued quasipolynomials for large semigroup elements, the expression~\eqref{eq:medianasymptotic} below is often an irrational number.  This reinforces our assertion that obtaining information about intermediate factorization invariants is both more difficult and more interesting than studying traditional extremal invariants.

\begin{Theorem}[Median factorization length]\label{t:median}
Suppose $S = \< n_1,n_2,n_3 \>$, and let
\begin{equation}\label{eq:F}
F = \frac{n_1(n_3-n_2)}{n_2(n_3-n_1)}.
\end{equation}
Then
\begin{equation}\label{eq:medianasymptotic}
\lim_{n\to\infty} \dfrac{\eta(n)}{n} =
\begin{cases}
\dfrac{1}{n_1}\left( 1 - \sqrt{ \dfrac{1-F}{2}} \right)  + \dfrac{1}{n_3}\sqrt{ \dfrac{1-F}{2}} 
& \text{if $F \leq \dfrac{1}{2}$}, \\[20pt]
\dfrac{1}{n_1} \sqrt{ \dfrac{F}{2}} + \dfrac{1}{n_3}\left( 1 - \sqrt{ \dfrac{F}{2}} \right)
& \text{if $F \geq \dfrac{1}{2}$}.
\end{cases}
\end{equation}%
\end{Theorem}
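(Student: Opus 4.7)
Proof plan. The strategy is to realize $\eta(n)/n$ as the median of an empirical length distribution that, after rescaling by $1/n$, converges to an explicit continuous distribution on $[1/n_3, 1/n_1]$ whose median yields the expression in \eqref{eq:medianasymptotic}. The first step is to count factorizations of fixed length $\ell \in \mathsf L(n)$. Intersecting $a_1 n_1 + a_2 n_2 + a_3 n_3 = n$ with $a_1 + a_2 + a_3 = \ell$ inside the positive octant and parametrizing the resulting line by $a_2$, the conditions $a_1, a_3 \ge 0$ restrict $a_2$ to an interval of length
\[
U(\ell) \;=\; \min\!\left(\frac{n_3 \ell - n}{n_3 - n_2},\ \frac{n - n_1 \ell}{n_2 - n_1}\right),
\]
the two expressions agreeing at $\ell = n/n_2$. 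By the trade-element description recalled before Example~\ref{example:mcnugget}, consecutive integer factorizations on this segment differ by $b$ in their $a_2$-coordinate, where $b$ is the middle coefficient of the minimal length-preserving trade; consequently, the number of factorizations of length $\ell$ equals $U(\ell)/b + O(1)$.

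Next I would pass to the limit. The length set $\mathsf L(n)$ is eventually an arithmetic progression of step $d := \gcd(n_2-n_1,\, n_3-n_2)$ (a consequence of Theorem~\ref{t:lstructure}), so summing and replacing the resulting sum by an integral gives
\[
|\{z \in \mathsf Z(n) : |z| \le \ell_0\}| \;=\; \frac{1}{bd}\int_{\mm(n)}^{\ell_0} U(\ell)\,d\ell \;+\; O(n),
\]
and setting $\ell_0 = \MM(n)$ yields $|\mathsf Z(n)| = \Theta(n^2)$. Writing $L = \ell/n$ and $\tilde U(L) := U(nL)/n$, the ratio converges pointwise to
\[
\frac{\int_{1/n_3}^{L_0} \tilde U(L)\,dL}{\int_{1/n_3}^{1/n_1} \tilde U(L)\,dL},
\]
which is the CDF of a piecewise-linear tent density on $[1/n_3, 1/n_1]$ with apex at $L = 1/n_2$.

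Finally I would compute the limit median explicitly. Direct integration shows that the left-piece area (over $[1/n_3, 1/n_2]$) is $(n_3 - n_2)/(2 n_2^2 n_3)$ while the total area is $(n_3 - n_1)/(2 n_1 n_2 n_3)$, and their ratio is precisely the quantity $F$ from \eqref{eq:F}. Thus the limit median $L^*$ lies in $[1/n_3, 1/n_2]$ iff $F \ge 1/2$, and in $[1/n_2, 1/n_1]$ otherwise; in each regime, the equation ``cumulative area up to $L^*$ equals half the total'' is a quadratic in $L^*$ whose positive solution, after algebraic simplification, reproduces the two cases of \eqref{eq:medianasymptotic}. Since $\tilde U$ is continuous and strictly positive on the open interval $(1/n_3, 1/n_1)$, the limit CDF is strictly increasing at $L^*$, so pointwise CDF convergence upgrades to convergence of medians, yielding $\eta(n)/n \to L^*$.

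The hard part will be uniformly controlling the $O(1)$ per-length error so that its aggregate is $o(n^2)$. There are $O(n/d)$ summands each contributing error $O(1)$, for cumulative error $O(n)$, which is indeed negligible against $|\mathsf Z(n)| \sim cn^2$; but a rigorous argument must handle with care the boundary lengths near $\mm(n)$ and $\MM(n)$, where Theorem~\ref{t:lstructure} allows $\mathsf L(n)$ to deviate from the bulk arithmetic progression, as well as the crossover at $\ell \approx n/n_2$ where the two branches of $U(\ell)$ meet.
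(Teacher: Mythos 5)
Your plan is correct, and it arrives at the same limit object as the paper---a triangular density whose apex splits the mass in ratio $F:1-F$, so the median is the square-root expression in \eqref{eq:medianasymptotic}---but by a genuinely different route. The paper works only along the subsequence $n=ks$ with $s=\delta t n_1n_2n_3$ (so the minimum, mode, and maximum lengths are exactly $ks/n_3$, $ks/n_2$, $ks/n_1$), proves an upper bound (Lemma~\ref{l:upperbound}) and an inductive lower bound (Lemma~\ref{l:lowerbound}) for the multiplicity function via the trade structure, normalizes to a standard triangular distribution on $[0,1]$ and quotes its median, and then needs a separate transfer argument (Lemma~\ref{l:subseqconv}) to pass to all $n$. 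You instead count lattice points on each length-$\ell$ segment directly: the same-length factorizations form a single $a_2$-progression of step $b$, and since the trade $(0,b,0)\to(a,0,c)$ (and its reverse, when $a_2+b\le U(\ell)$) preserves nonnegativity, the minimal $a_2$ is below $b$ and the maximal exceeds $U(\ell)-b$, giving the two-sided count $U(\ell)/b+O(1)$ in one stroke; this treats every $n$ uniformly, needs no analogue of Lemma~\ref{l:subseqconv}, and yields $|\mathsf Z(n)|=\Theta(n^2)$ as a byproduct rather than a citation. What the paper's normalization buys is clean bookkeeping and a textbook median formula; what your route buys is uniformity in $n$ and a shorter path to the CDF, at the cost of the error control you correctly flag (per-length and Riemann-sum errors are $O(n)$ against $\Theta(n^2)$, and the boundary lengths permitted to be missing by Theorem~\ref{t:lstructure} contribute only $O(1)$ since $U$ is bounded there). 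When writing it up, make the ``minimal $a_2<b$'' step explicit---it is where your argument replaces the paper's induction in Lemma~\ref{l:lowerbound}---and record the algebra confirming, e.g., that for $F\ge\tfrac{1}{2}$ your quadratic gives $\tfrac{1}{n_3}\bigl(1+\sqrt{(n_3-n_1)(n_3-n_2)/(2n_1n_2)}\bigr)$, which indeed equals $\tfrac{1}{n_1}\sqrt{F/2}+\tfrac{1}{n_3}\bigl(1-\sqrt{F/2}\bigr)$.
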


Regarding $n_1$ and $n_3$ as fixed, we see that the expressions in~\eqref{eq:medianasymptotic} are convex combinations of $1/n_1$ and $1/n_3$ with coefficients in terms of the constant $F$ (called the \emph{fulcrum constant}, see Section~\ref{sec:approx}).  We see from~\eqref{eq:F} that $F$
varies from $0$ to $1$ as $n_2$ varies from $n_3$ to $n_1$, and the coefficients of $1/n_1$ and $1/n_3$ vary between $1-1/\sqrt{2} \approx 0.29$ and $1/\sqrt{2} \approx 0.71$. Since the combinations are convex and $1/n_1 > 1/n_3$, it follows that 
\begin{equation*}
\lim_{n \to \infty} \frac{\eta(n)}{n} \in \left[
\left(\frac{2 - \sqrt{2}}{2} \right)\frac{1}{n_1} + 
\left(\frac{\sqrt{2}}{2}\right)\frac{1}{n_3}, \,\, 
\left(\frac{\sqrt{2}}{2}\right) \frac{1}{n_1} + 
\left(\frac{2 - \sqrt{2}}{2}\right) \frac{1}{n_3}
\right]
\end{equation*}
where the endpoints are attained at $F = 0$ and $F = 1$, respectively. In particular, these bounds are sharp since $F = 0$ precisely when $n_2 = n_3$, and $F = 1$ precisely when $n_1 = n_2$. Furthermore, 
\begin{equation*}
F = \frac{1}{2} \qquad \text{if and only if} \qquad \frac{1}{n_2} = \frac{1}{2}\left(\frac{1}{n_1} + \frac{1}{n_3} \right)
\end{equation*}
(that is, when $n_2$ is the harmonic mean of $n_1$ and $n_3$).  Moreover, our proof shows that this occurs if and only if
\begin{equation}\label{eq:everything2}
 \lim_{n \to \infty} \frac{\mu(n)}{n} = \lim_{n \to \infty} \frac{\eta(n)}{n} = \frac{1}{n_2},
\end{equation}
which is entirely analogous to the formulas~\eqref{eq:minmaxquasi} that describe
the asymptotic behavior of $\mm(n)$ and $\MM(n)$.  

In cases where~\eqref{eq:medianasymptotic} is rational, it is natural to ask whether the median function $\eta(n)$ is eventually quasilinear, in the spirit of Theorems~\ref{t:minmaxquasi} and~\ref{t:mode}.  Indeed, preliminary computations indicate the answer is ``yes'' for $S = \<12, 15, 20\>$ and ``no'' for $S = \<7, 16, 25\>$.  With this in mind, we pose the following question.  

\begin{Problem}\label{prob:medianquasi}
For which 3-generated numerical semigroups $S = \<n_1, n_2, n_3\>$ is the median function $\eta(n)$ eventually quasilinear?  
\end{Problem}

\section{Mode factorization lengths}
\label{sec:mode}

In this section, we prove Theorem~\ref{t:mode}.  

\begin{proof}[Proof of Theorem~\ref{t:mode}]
Fix $n \in S$ and $\ell \in \gamma(n)$, and suppose $(a,0,c \mid 0,b,0)$ is the unique minimal length-preserving trade in $S$.  If $n_1 = n_3$, then $t = n_1$ and $t \mid n$, so~$\nu(n) = n/t$.  Otherwise, since no element of the 2-generated semigroup $\<n_1, n_3\>$ has multiple factorizations of the same length \cite{NSBook}, there is a unique length-$\ell$ factorization $(a_1,a_2,a_3)$ of $n$ with maximal second coefficient.  The factorizations of $n$ with length $\ell$ can then be enumerated as
\begin{equation*} 
(a_1,a_2,a_3), \quad (a_1+a,\,a_2-b,\,a_3+c), \quad \ldots, \quad (a_1+ma,\,a_2-mb,\,a_3+mc),
\end{equation*} 
in which $m+1 = \nu(n)$. Each of these induces a factorization of $n+t$, namely
\begin{equation*} 
(a_1,a_2+b,a_3),\quad (a_1+a,\,a_2,\,a_3+c), \quad \ldots, \quad (a_1+ma,\,a_2-(m-1)b,\,a_3+mc) ,
\end{equation*} 
each with length $\ell + b$. Thus,
\begin{equation}\label{eq:modeinequality}
\nu(n+t) \ge \nu(n) + 1,
\end{equation}
with the last factorization of length $\ell + b$ given by 
\begin{equation*}
\big(a_1+(m+1)a,\,\, a_2-mb,\,\, a_3+(m+1)c \big).
\end{equation*}
Conversely, let $(b_1,b_2,b_3)$ be the unique factorization of $n+t$ with mode length $\ell' \in \gamma(n+t)$ and maximal second coefficient, and let $m' = \nu(n + t) - 1$.  As before, the $m' + 1$ factorizations 
\begin{equation*} 
(b_1, b_2, b_3), \quad (b_1 + a, \,\, b_2 - b, \,\, b_3 + c), \quad \ldots, \quad \big(b_1 + m'a, \,\, b_2 - m'b, \,\, b_3 + m'c\big) 
\end{equation*} 
of $n + t$ with length $\ell'$ induce $m'$ factorizations of $n$ with length $\ell' - b$, namely
\begin{align*} 
(b_1, b_2 - b, b_3),
&\quad (b_1 + a, \,\, b_2 - 2b, \,\, b_3 + c),\ldots, \\ 
&\qquad \big(b_1 + (m'-1)a, \,\, b_2 - m'b, \,\, b_3 + (m'-1)c \big).
\end{align*} 
Together with~\eqref{eq:modeinequality}, we conclude $\nu(n + t) = \nu(n) + 1$, which immediately yields
\begin{equation*}
\nu(n) = \tfrac{1}{t}n + c_0(n)
\end{equation*}
for some $t$-periodic function $c_0(n)$.  Lastly, the map between factorizations of $n$ and $n + t$ above yields a bijection 
\begin{equation*}
\gamma(n) \to \gamma(n + t)
\end{equation*}
that sends $\ell \mapsto \ell + b$.  This completes the proof.  
\end{proof}

\section{Continuous approximation of length multiplicities}
\label{sec:approx}

In this section, we develop machinery used in the proofs of Theorems~\ref{t:mean} and~\ref{t:median}. We begin by studying the values of $\mu$ and $\eta$ along certain subsequences of $S$, and then prove these values coincide asymptotically with those over the entire semigroup. Before doing so, we need to introduce one more quantity.  



Fix $n \in S = \<n_1, n_2, n_3\>$, and let 
\begin{equation*}
\delta = \gcd(n_3 - n_1, n_3 - n_2, n_2 - n_1).  
\end{equation*}
This constant arises in factorization theory as the minimum element of the \emph{delta set} of $S$; see \cite{DSB} for a full development.  Most relevant to our setting, $\delta$ is the step size of every arithmetic sequence in the structure theorem (Theorem~\ref{t:lstructure}).  

Theorem~\ref{t:deltabasic} demonstrates that $\delta$ is closely related to the trade element $t$ for 3-generated numerical semigroups.

\begin{Theorem}[{\cite[Theorem~1.3]{ENS}}]\label{t:deltabasic}
If $S = \<n_1, n_2, n_3\>$ is a numerical semigroup, then
\begin{equation*}
\delta t = n_2(n_3 - n_1),
\end{equation*}
where $t$ denotes the trade element of $S$.  
\end{Theorem}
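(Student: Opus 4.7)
The plan is to pin down $a$, $b$, $c$ in the unique minimal length-preserving trade explicitly in terms of $n_1, n_2, n_3$, and then just compute $t = bn_2$. Starting from the two defining conditions $a + c = b$ and $an_1 + cn_3 = bn_2$, I substitute $b = a + c$ into the second equation. After rearranging, this collapses to the single diophantine relation
\begin{equation*}
a(n_2 - n_1) = c(n_3 - n_2).
\end{equation*}

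Next I would argue that the minimality/uniqueness of the length-preserving trade (as cited from \cite[Theorem~1.3]{ENS} and recalled just above the theorem) forces $\gcd(a, c) = 1$. Indeed, if $d = \gcd(a, c) > 1$, then $(a/d, 0, c/d \mid 0, b/d, 0)$ would be a strictly smaller length-preserving trade of exactly the same form, contradicting the fact that the displayed trade is the unique minimal one. Combined with the relation above, coprimality and elementary divisibility force $a = (n_3 - n_2)/\delta'$ and $c = (n_2 - n_1)/\delta'$, where $\delta' = \gcd(n_3 - n_2,\, n_2 - n_1)$.

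The last step is to identify $\delta'$ with $\delta$ and finish the arithmetic. Since $n_3 - n_1 = (n_3 - n_2) + (n_2 - n_1)$, adding this quantity to the pair $\{n_3 - n_2, n_2 - n_1\}$ cannot change their gcd, so $\delta = \delta'$. Hence
\begin{equation*}
b = a + c = \frac{(n_3 - n_2) + (n_2 - n_1)}{\delta} = \frac{n_3 - n_1}{\delta},
\end{equation*}
and $t = bn_2 = n_2(n_3 - n_1)/\delta$, which rearranges to $\delta t = n_2(n_3 - n_1)$.

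The main obstacle I anticipate is cleanly justifying the passage from the combinatorial minimality of the trade (i.e.\ that every other factorization of $n$ of the same length is obtained by \emph{iterating} this one trade) to the purely arithmetic statement $\gcd(a, c) = 1$. The reverse direction, namely that the primitive positive integer solution of $a(n_2 - n_1) = c(n_3 - n_2)$ yields \emph{some} minimal length-preserving trade, is elementary; the subtle point is ensuring that no essentially different length-preserving trade of the form $(a', 0, c' \mid 0, b', 0)$ with $(a', c')$ not a multiple of $(a, c)$ could exist. This is precisely the content packaged into the cited uniqueness result, so once that is invoked the rest is a short computation.
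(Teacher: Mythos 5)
This theorem is not proved in the paper at all: it is quoted verbatim from \cite[Theorem~1.3]{ENS}, so there is no ``paper proof'' to compare against. Judged on its own, your derivation is correct and is a reasonable way to recover the identity from the part of the ENS result that the paper does recall in the introduction (existence and uniqueness of the length-preserving trade $(a,0,c \mid 0,b,0)$ with the generation property). The arithmetic skeleton is fine: $b=a+c$ and $an_1+cn_3=bn_2$ give $a(n_2-n_1)=c(n_3-n_2)$; with $\gcd(a,c)=1$ this forces $a=(n_3-n_2)/\delta'$, $c=(n_2-n_1)/\delta'$ with $\delta'=\gcd(n_3-n_2,\,n_2-n_1)$; and since $n_3-n_1=(n_3-n_2)+(n_2-n_1)$ you indeed have $\delta'=\delta$, whence $t=bn_2=n_2(n_3-n_1)/\delta$.

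The one step you flag as delicate can be closed cleanly, and you should do it via the generation property rather than an informal appeal to a ``strictly smaller trade contradicting minimality'' (the paper never defines a partial order on trades). Concretely: if $d=\gcd(a,c)>1$, then $d\mid b$ as well, and the element $n'=t/d$ has the two distinct factorizations $(a/d,0,c/d)$ and $(0,b/d,0)$, both of length $b/d$. By the recalled statement from \cite{ENS}, one must be obtainable from the other by repeatedly performing the trade $(a,0,c\mid 0,b,0)$; but $(0,b/d,0)$ has fewer than $b$ copies of $n_2$ and no copies of $n_1$ or $n_3$, so no application of the trade in either direction is possible, a contradiction. (Alternatively, note that the trade $(a,0,c\mid 0,b,0)$ is itself the $d$-fold iterate of $(a/d,0,c/d\mid 0,b/d,0)$, so it could not be minimal.) With that two-line argument inserted, your proof is complete and non-circular, since only the existence/uniqueness/generation part of the cited theorem is used to derive the formula $\delta t=n_2(n_3-n_1)$.
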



Theorems~\ref{t:minmaxquasi},~\ref{t:mode}, and~\ref{t:deltabasic} demonstrate that the constants $\delta$, $t$, $n_1$, $n_2$, and $n_3$ each contain important information about the factorization structure of $S$, which suggests examining multiples of $s = \delta tn_1n_2n_3$.  Since the only factorization of $0 \in S$ is the length 0 factorization $(0,0,0)$, we obtain
\begin{align}\label{eq:akbk}
a_k &= \mm(ks) = \tfrac{1}{n_3}ks = k\delta tn_1n_2 \quad \text{and} \\
b_k &= \MM(ks) = \tfrac{1}{n_1}ks = k\delta tn_2n_3
\end{align}
from Theorem~\ref{t:minmaxquasi} for each $k \in \NN$.  Moreover, $\gamma(ks) = \{c_k\}$ is singleton and precisely $d_k$ minimal trades can be made from the factorization $(0, c_k, 0)$ of $ks$, where
\begin{align*}
c_k &= \tfrac{1}{n_2}ks = k\delta tn_1n_3 \quad \text{and} \\
d_k &= \nu(ks) - 1 = \tfrac{1}{t}ks = k\delta n_1n_2n_3
\end{align*}
from Theorem~\ref{t:mode}.  In what follows, let $f_k(x)$ denote the multiplicity of the length $x \in \Le(sk)$.  Figure~\ref{f:multiplicities} depicts the multiplicity function of $630 \in \<3,5,7\>$.  

\begin{figure}
\includegraphics[width=0.75\textwidth]{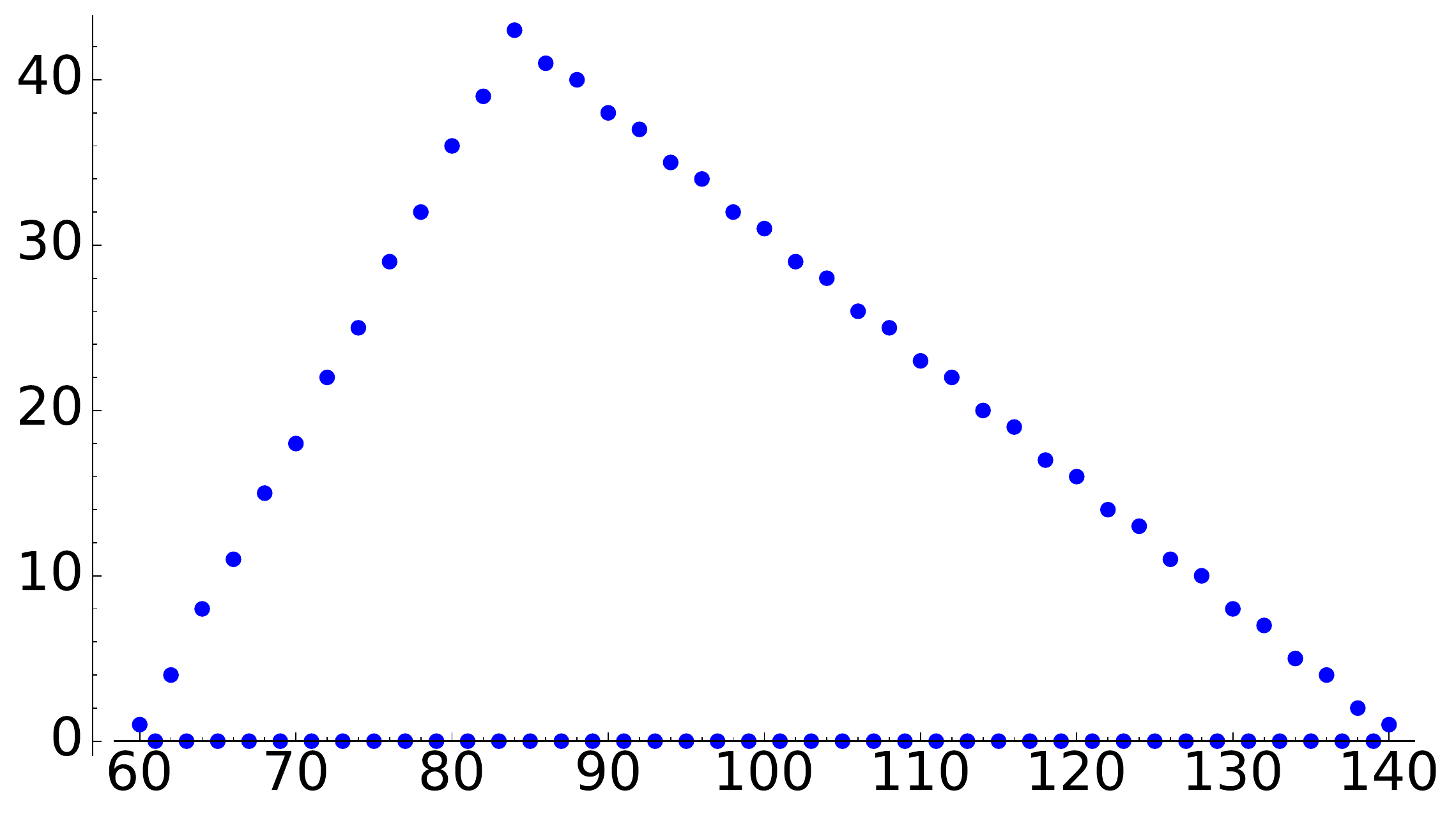}
\caption{Factorization length multiplicity function of $630 \in \<3, 5, 7\>$ plotted against factorization length.  In this semigroup, $\delta = 2$, so the function assumes the value 0 on every other length.  Disregarding the frequent dips to zero, the function increases roughly linearly from the minimum factorization length $(90)$ to the mode length $(126)$, then decreases roughly linearly from mode length to maximum factorization length $(210)$.}
\label{f:multiplicities}
\end{figure}

To characterize the asymptotic behavior of $f_k$, we construct a step function $S_k$ with the property that
\begin{equation*}
\sum_{i=a_k}^{\ell-1} f_k(i) = \frac{1}{\delta} \int_{a_k}^\ell S_k(x) \,dx
\end{equation*}
for all $\ell \in \Le(ks)$.  We give $S_k$ one step of width $\delta$ for each length $\ell \in \Le(ks)$.  
Steps corresponding to $\ell \in [a_k, c_k)$ agree with $f_k(\ell)$ on their left endpoints, leaving their right endpoints open,
and steps corresponding to $\ell \in (c_k, b_k]$ agree with $f_k(\ell)$ at their right endpoints, leaving their left endpoints open.  
The value $S_k(c_k)$ is left undefined, which is not a problem since we intend to integrate $S_k$.  Formally, this yields
\begin{equation*}
S_k(x) = 
\begin{cases}
f_k(a_k + j \delta) & \text{if $x < c_k$, $x \in \big[a_k + j \delta,\,a_k + (j+1) \delta \big)$}, \\[5pt]
f_k(c_k + j \delta) & \text{if $x > c_k$, $x \in \big(c_k + (j-1) \delta ,\,c_k + j \delta\big]$}.
\end{cases}
\end{equation*}
See Figure~\ref{f:stepmultiplicities} for a depiction of the step function constructed from Figure~\ref{f:multiplicities}.  

\begin{figure}
\includegraphics[width=0.75\textwidth]{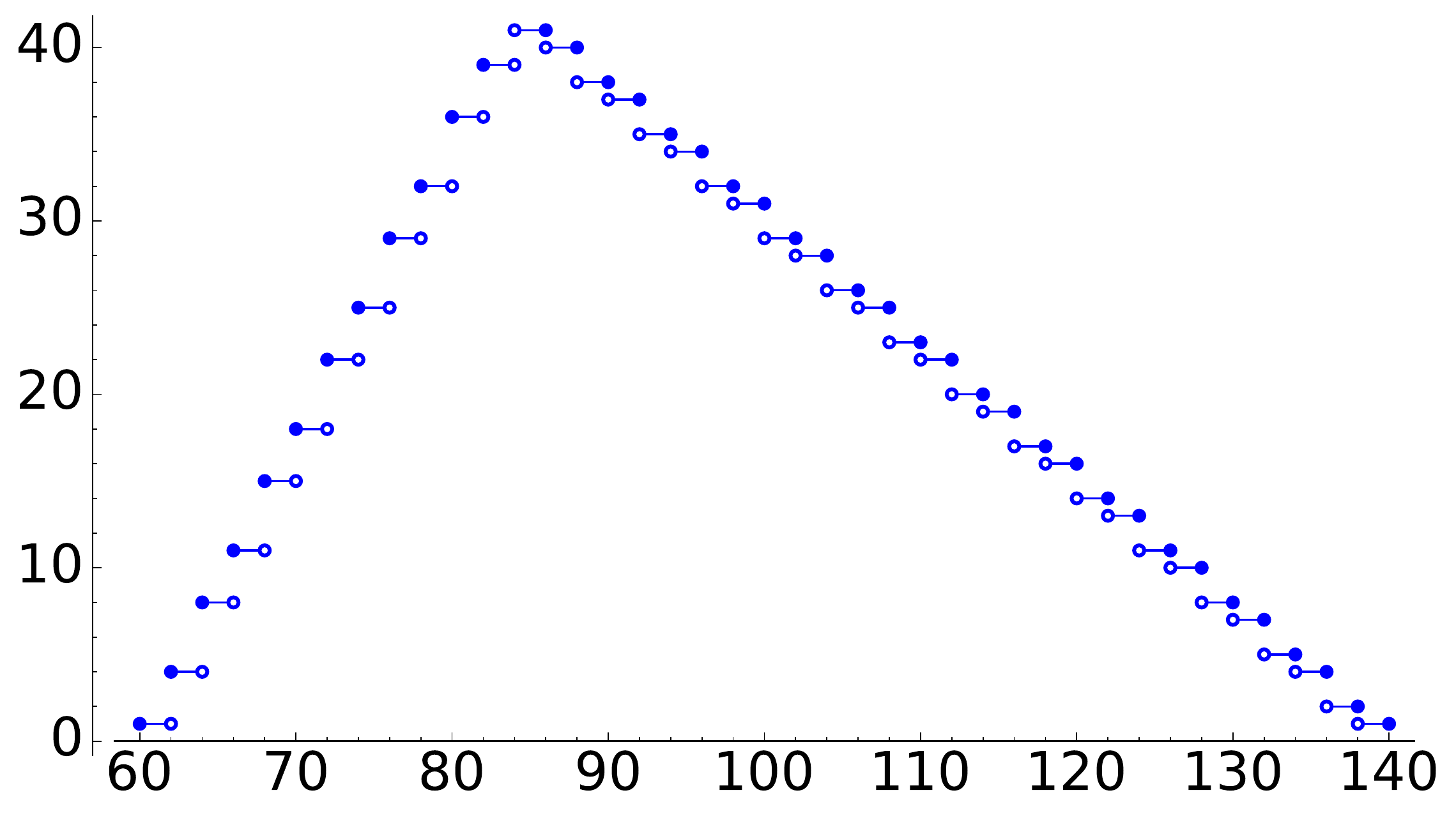}
\caption{
The step function corresponding to the same semigroup element as Figure~\ref{f:multiplicities}. 
} 
\label{f:stepmultiplicities}
\end{figure}

In order to describe the asymptotic behavior of $S_k$ as $k \to \infty$, we prove that, after appropriate affine transformations, $S_k$ converges uniformly to a piecewise linear function $L_k$ on $[a_k, b_k]$ connecting the points $(a_k, 1)$, $(c_k, d_k + 1)$, and $(b_k, 1)$.  More precisely, $L_k$ is given by
\begin{equation*}
L_k(x) = 
\begin{cases}
\dfrac{n_2n_3}{t(n_3 - n_2)}(x - a_k) + 1 & \text{ if } x \in [a_k, c_k] ,\\[10pt]
\dfrac{n_1n_2}{t(n_2 - n_1)}(b_k - x) + 1 & \text{ if } x \in [c_k, b_k].
\end{cases}
\end{equation*}
This is depicted in Figure~\ref{f:upperlowerbounds} on the same axes as $S_k$.  

We begin by showing that $L_k$ is an upper bound for the step function $S_k$.  

\begin{Lemma}\label{l:upperbound}
For each integer $x \in [a_k, b_k]$, the multiplicity $f_k(x)$ is at most $L_k(x)$.  
In particular $S_k(x) \le L_k(x)$ for all $x \in [a_k, b_k]$.  
\end{Lemma}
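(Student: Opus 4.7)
The plan is to reduce the multiplicity bound at length $\ell$ to a bound on the second coordinate of the ``$\alpha_2$-maximal'' factorization of $ks$ of length $\ell$, and then to bound that coordinate linearly in $\ell$ using the two defining equations of a factorization.

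Fix an integer $\ell \in [a_k, b_k]$, and let $(\alpha_1, \alpha_2, \alpha_3)$ denote the factorization of $ks$ of length $\ell$ whose second coordinate is maximal (if no such factorization exists, then $f_k(\ell) = 0 \le L_k(\ell)$ since $L_k \ge 1$ on $[a_k, b_k]$). By the trade description recalled after Example~\ref{example:mcnugget}, every other factorization of $ks$ of length $\ell$ is obtained from this one by repeatedly applying the trade $(a, 0, c \mid 0, b, 0)$ in the direction that decreases the middle coordinate, so nonnegativity of that coordinate forces
\[
f_k(\ell) \le \tfrac{\alpha_2}{b} + 1.
\]

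The next step is to bound $\alpha_2$ linearly in $\ell$ using only the nonnegativity of $\alpha_1$ and $\alpha_3$. Combining $\alpha_1 + \alpha_2 + \alpha_3 = \ell$ with $\alpha_1 n_1 + \alpha_2 n_2 + \alpha_3 n_3 = ks$ to eliminate $\alpha_3$ and $\alpha_1$ in turn yields
\[
\alpha_2(n_3 - n_2) + \alpha_1(n_3 - n_1) = n_3 \ell - ks
\quad\text{and}\quad
\alpha_2(n_2 - n_1) + \alpha_3(n_3 - n_1) = ks - n_1 \ell,
\]
so that
\[
\alpha_2 \le \frac{n_3 \ell - ks}{n_3 - n_2}
\qquad\text{and}\qquad
\alpha_2 \le \frac{ks - n_1 \ell}{n_2 - n_1}.
\]
Substituting these into the multiplicity bound and simplifying via $b = (n_3-n_1)/\delta$, $a_k = ks/n_3$, $b_k = ks/n_1$, and the identity $\delta t = n_2(n_3 - n_1)$ from Theorem~\ref{t:deltabasic}, a direct calculation shows that the first bound equals the increasing piece of $L_k$ on $[a_k, c_k]$ and the second equals the decreasing piece on $[c_k, b_k]$. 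In either case $f_k(\ell) \le L_k(\ell)$.

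Finally, to pass from integer lengths to the step function $S_k$, I use monotonicity of $L_k$ on each of its two linear pieces. On $[a_k, c_k]$, $L_k$ is nondecreasing while $S_k$ takes the value $f_k(a_k + j\delta)$ on each half-open interval $[a_k + j\delta,\, a_k + (j+1)\delta)$, so
\[
S_k(x) = f_k(a_k + j\delta) \le L_k(a_k + j\delta) \le L_k(x),
\]
and the argument on $[c_k, b_k]$ is symmetric, with $S_k$ attaining its value at the right endpoint of each step while $L_k$ is nonincreasing. I do not anticipate any serious obstacle: the only nontrivial ingredients are the reduction $f_k(\ell) \le \alpha_2/b + 1$, which relies on the fact (special to the 3-generator case) that $S$ has a unique minimal length-preserving trade, and the purely algebraic check that the linear upper bounds match the two pieces of $L_k$, which in turn hinges on Theorem~\ref{t:deltabasic}.
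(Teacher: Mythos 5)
Your proposal is correct and follows essentially the same route as the paper: it bounds the second coordinate of a length-$\ell$ factorization linearly in $\ell$ via the two defining equations, then converts this, through the unique minimal length-preserving trade and Theorem~\ref{t:deltabasic}, into the bound $f_k(\ell) \le L_k(\ell)$. The only differences are cosmetic: you write out the second (decreasing) piece and the passage to $S_k$ via monotonicity explicitly, where the paper declares them analogous or immediate.
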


\begin{proof}
First, suppose $x = a_k + \ell \in [a_k, c_k]$, so that
\begin{equation*} 
L_k(x) = \dfrac{n_2n_3}{t(n_3 - n_2)}\ell + 1.
\end{equation*} 
If no factorizations of length $x$ exist, then we are done.  Otherwise, it suffices to prove that any length $x$ factorization $z = (z_1,z_2,z_3)$ must satisfy
$$z_2 \le \frac{n_3}{n_3 - n_2}\ell,$$
as Theorem~\ref{t:deltabasic} then bounds the number of minimal length-preserving trades that can be applied to $z$.  Indeed, since $ks = a_kn_3 = z_1n_1 + z_2n_2 + z_3n_3$, we see
$$n_3\ell = n_3(z_1 + z_2 + z_3) - n_3a_k = z_1(n_3 - n_1) + z_2(n_3 - n_2) \ge z_2(n_3 - n_2),$$
from which we draw the desired conclusion.  

Since the case $x \in [c_k, b_k]$ is analogous to the first, we omit its proof.  
\end{proof}

We next exhibit a lower bound for $S_k$ (also depicted in Figure~\ref{f:upperlowerbounds}) that we will see also converges uniformly to $L_k$ as $k \to \infty$ (after appropriate affine transfomations).  

\begin{Lemma}\label{l:lowerbound}
There is a constant $C$ independent of $k$ with the following properties.  
\begin{enumerate}[leftmargin=*]
\addtolength{\itemsep}{5pt}
\item 
For any $x \in [a_k, c_k)$, writing $x = a_k + t(n_3 - n_2)j + \ell$ with $0 \le \ell < t(n_3 - n_2)$ and $0 \le j < k\delta n_1$, we have
$f_k(x) \geq n_2n_3(j - C) + 1$.  

\item 
For any $x \in (c_k, b_k]$, writing $x = b_k - t(n_2 - n_1)j - \ell$ with $0 \le \ell < t(n_2 - n_1)$ and $0 \le j < k\delta n_3$, we have
$f_k(x) \geq n_1n_2(j - C) + 1$.  

\end{enumerate}
In particular, $|L_k - S_k|$ is bounded independent of $k$.  
\end{Lemma}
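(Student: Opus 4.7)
The plan is to prove part (a) by exhibiting an explicit ``anchor'' factorization of $x$ with maximal second coordinate, then enumerating all length-$x$ factorizations by iterating the minimal length-preserving trade; part (b) follows by a symmetric argument swapping the roles of $n_1$ and $n_3$. Setting $u_1 = (n_3-n_1)/\delta$ and $u_2 = (n_3-n_2)/\delta$, one verifies $\gcd(u_1,u_2) = 1$ (since $\delta = \gcd(n_3-n_1, n_3-n_2, n_2-n_1)$) and reads off from $a + c = b$ and $an_1 + cn_3 = bn_2$ that the minimal length-preserving trade has $a = u_2$, $b = u_1$, and $c = (n_2-n_1)/\delta$.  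Theorem~\ref{t:deltabasic} then gives the identity $b = t/n_2$, or equivalently $u_1/b = 1$.

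For the anchor, eliminating $z_3$ from the factorization equations shows that $(z_1,z_2,z_3) \in \NN^3$ is a factorization of $x = a_k + t(n_3-n_2)j + \ell$ exactly when $z_1 u_1 + z_2 u_2 = n_3(t(n_3-n_2)j+\ell)/\delta$ and $z_3 = x - z_1 - z_2 \ge 0$. Since $\gcd(u_1,u_2) = 1$, the smallest nonnegative $z_1$ admitting a nonnegative integer $z_2$ is a unique residue $z_1^* \in \{0,1,\dots,u_2-1\}$ determined by $\ell$, and a direct computation gives
\begin{equation*}
z_2^* \;=\; n_3 t j + \tfrac{n_3\ell - z_1^*(n_3-n_1)}{n_3-n_2}
\qquad\text{and}\qquad
z_3^* \;=\; n_2 t (k\delta n_1 - j) - \tfrac{n_2\ell - z_1^*(n_2-n_1)}{n_3-n_2}.
\end{equation*}
Using $\ell < t(n_3-n_2)$ and $z_1^* < u_2$, both correction terms are bounded in absolute value by constants depending only on $S$; in particular, the second is strictly less than $n_2 t$, so $z_3^* \ge 0$ whenever $j < k\delta n_1$, while $z_2^* \ge n_3 tj - u_1$.

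Any factorization of $x$ other than the anchor $z^* = (z_1^*,z_2^*,z_3^*)$ has $z_1 \ge z_1^* + u_2$ and hence $z_2 \le z_2^* - u_1$, so $z^*$ realizes the maximum $z_2$-coordinate. By the uniqueness of the minimal length-preserving trade~\cite[Theorem~1.3]{ENS}, every length-$x$ factorization arises from $z^*$ by iterating $(z_1,z_2,z_3) \mapsto (z_1+a, z_2-b, z_3+c)$; since $a, c \ge 0$, only the condition $z_2 - rb \ge 0$ is restrictive, yielding $f_k(x) = \lfloor z_2^*/b \rfloor + 1$. The identity $u_1/b = 1$ converts $z_2^* \ge n_3 tj - u_1$ into $z_2^*/b \ge n_2 n_3 j - 1$, whence $f_k(x) \ge n_2 n_3 j$, which proves part (a) with $C = 1$. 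The ``in particular'' assertion then follows by combining Lemma~\ref{l:upperbound} (which gives $S_k \le L_k$) with this lower bound: since $L_k$ is piecewise linear with slope $n_2 n_3/(t(n_3-n_2))$ on $[a_k,c_k]$ while $S_k$ is constant on each width-$\delta$ step, a step-by-step comparison bounds $|L_k - S_k|$ by a constant depending only on $S$.

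The main obstacle is the careful bookkeeping of the correction terms: one must verify that $z^*$ is a genuinely valid factorization throughout the range $j < k\delta n_1$, with the boundary case $j = k\delta n_1 - 1$ where $z_3^*$ is smallest being the most delicate. A subtle point worth flagging is that the lower bound only has content when $x$ actually admits a factorization (equivalently, $\delta \mid \ell$); for other $x \in [a_k,c_k)$ we have $f_k(x) = 0$, but such $x$ never enter the ``in particular'' comparison, since $S_k$ samples only at lengths of the form $a_k + j\delta$, which automatically lie in $\Le(ks)$ for $k$ large.
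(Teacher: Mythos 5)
Your proof is correct, but it takes a genuinely different route from the paper's. The paper argues by induction on $j$: it invokes Theorem~\ref{t:lstructure} to choose $C$ so that the length is realized at $j = C$, and then, given $n_2n_3(j-C)+1$ factorizations of length $x$, it uses Theorem~\ref{t:deltabasic} to extract one with second coordinate at least $tn_3(j-C)$, checks its third coordinate is large enough, and applies the move $(z_1,\,z_2+tn_3,\,z_3-tn_2)$ to gain $n_2n_3$ additional length-preserving trades at length $x + t(n_3-n_2)$. You instead solve the system explicitly: eliminating $z_3$ gives $z_1u_1 + z_2u_2 = n_3(t(n_3-n_2)j+\ell)/\delta$, the anchor with minimal $z_1$ (hence maximal $z_2$) is written in closed form, and since the minimal length-preserving trade is $\big((n_3-n_2)/\delta,\,0,\,(n_2-n_1)/\delta \mid 0,\,(n_3-n_1)/\delta,\,0\big)$ with $b = t/n_2 = u_1$, you obtain the exact count $f_k(x) = \lfloor z_2^*/b\rfloor + 1$ and hence the bound with $C=1$. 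Your formulas check out: $z_2^* = n_3tj + \tfrac{n_3\ell - z_1^*(n_3-n_1)}{n_3-n_2}$ and $z_3^* = n_2t(k\delta n_1 - j) - \tfrac{n_2\ell - z_1^*(n_2-n_1)}{n_3-n_2}$ are correct, the correction terms are bounded by constants depending only on $S$, $z_3^*\ge 0$ throughout $j < k\delta n_1$, and $z_2^*/b \ge n_2n_3j-1$ follows from $t/b = n_2$. What your approach buys is an exact formula for $f_k(x)$, an explicit constant, and no need for Theorem~\ref{t:lstructure} as a base case (realizability for $j\ge 1$ with $\delta \mid \ell$ falls out of the construction); what it costs is the Diophantine bookkeeping you flag. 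The caveat you raise about non-realizable lengths ($\delta\nmid\ell$, where $f_k(x)=0$) is genuine but applies equally to the paper's own statement and inductive base case, and as you note it is harmless for the bound on $|L_k - S_k|$; the only small slip is that the possibly-missing lengths at the start of the progression are excluded because their index $j$ is bounded (and thus absorbed into $C$ or into the constant in the final comparison), not because $k$ is large.
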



\begin{proof}
Suppose $x < c_k$, and write $x = a_k + \ell + t(n_3 - n_2)j$ with $0 \le \ell < t(n_3 - n_2)$.  By Theorem~\ref{t:lstructure}, $C$ can be chosen large enough to ensure $f_k(x)$ is positive for $j = C$.  For $j < C$, the statement is trivial since $f_k(x)$ is non-negative.  

Now, for $j > C$, we proceed by induction on $j$.  If $f_k(x) \ge n_2n_3(j - C) + 1$, then by Theorem~\ref{t:deltabasic} there is some factorization $z = (z_1, z_2, z_3)$ of length $x$ with second coordinate at least $z_2 \ge tn_3(j - C)$.  By Lemma~\ref{l:upperbound}, we must have 
\begin{align*}
a_k(n_3 - n_1) 
&= z_1n_1 + z_2n_2 + z_3n_3 - a_kn_1 = (x - z_2 - z_3)n_1 + z_2n_2 + z_3n_3 - a_kn_1 \\
&= (\ell + t(n_3 - n_2)j)n_1 + z_2(n_2 - n_1) + z_3(n_3 - n_1) \\
&\le (\ell + t(n_3 - n_2)j)n_1 + \frac{n_3}{n_3 - n_2}(\ell + t(n_3 - n_2)j)(n_2 - n_1) + z_3(n_3 - n_1) \\
&= \frac{n_2(n_3 - n_1)}{n_3 - n_2}\ell + tn_2(n_3 - n_1)j + z_3(n_3 - n_1),
\end{align*}
which, by assumption on $\ell$ and $j$, yields
\begin{equation*}
a_k \le \frac{n_2}{n_3 - n_2}\ell + tn_2j + z_3 < tn_2 + tn_2j + z_3 < tn_2 + k\delta tn_1n_2 + z_3 = tn_2 + a_k + z_3.
\end{equation*}
This means $(z_1, z_2 + tn_3, z_3 - tn_2)$ is a factorization of $ks$ with length $x + t(n_3 - n_2)$ and second coordinate at least $tn_3(j - C) + tn_3$, so we conclude
\begin{equation*}
f_k(x + t(n_3 - n_2)) \ge n_2n_3(j - C) + n_2n_3 + 1
\end{equation*}
by Theorem~\ref{t:deltabasic}.  

The case $x > c_k$ is similar.  Since
\begin{equation*} 
0 \leq L_k(x) - S_k(x) \leq Cn_1n_2n_3
\end{equation*}
for all $x \in [a_k, b_k]$ by Lemma~\ref{l:upperbound}, the final claim holds as well.  
\end{proof}

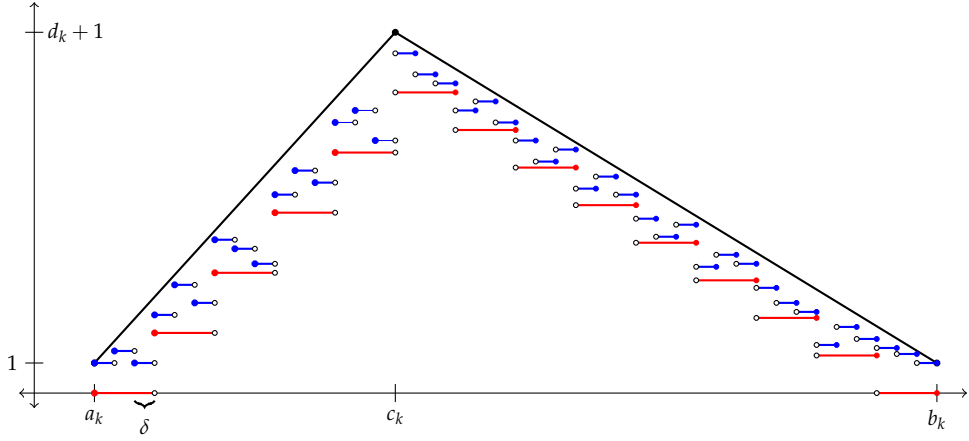
\begin{figure}
    \centering
    \begin{tikzpicture}[thick,scale=0.8, every node/.style={scale=0.8}]
        \draw[<->,thin](-.25,0)--(15.5,0);
        \draw[<->,thin](0,-.25)--(0,6.5);
      
        \draw[thick,black](1,0.5)to(6,6);
        \draw[thick,black](6,6)to(15,0.5);
        \draw[fill=black](1,0.5) circle (.04cm);
        \draw[fill=black](6,6) circle (.04cm);
        \draw[fill=black](15,0.5) circle (.04cm);

        \draw[thin] (-0.15,0.5)node[left]{$1$}--(0.15,0.5);
        \draw[thin] (-0.15,6)node[right, xshift=0.25cm]{$d_k+1$}--(0.15,6);

        \draw[thin] (1,-0.15)node[below]{$a_k$}--(1,0.15);
        \draw[thin] (6,-0.15)node[below]{$c_k$}--(6,0.15);
        \draw[thin] (15,-0.15)node[below]{$b_k$}--(15,0.15);

        \draw[thick,blue](1.0,0.5)to(1.3333,0.5);  \draw[fill=blue,blue](1.0,0.5) circle (0.04cm);   \draw[fill=white,thin](1.3333,0.5) circle (0.04cm);
        \draw[thick,blue](1.3333,0.7)to(1.6667,0.7);  \draw[fill=blue,blue](1.3333,0.7) circle (0.04cm);   \draw[fill=white,thin](1.6667,0.7) circle (0.04cm);
        \draw[thick,blue](1.6667,0.5)to(2,0.5);  \draw[fill=blue,blue](1.6667,0.5) circle (0.04cm);   \draw[fill=white,thin](2,0.5) circle (0.04cm);

        \draw[thick,blue](2.0,1.3)to(2.3333,1.3);  \draw[fill=blue,blue](2.0,1.3) circle (0.04cm);   \draw[fill=white,thin](2.3333,1.3) circle (0.04cm);
        \draw[thick,blue](2.3333,1.8)to(2.6667,1.8);  \draw[fill=blue,blue](2.3333,1.8) circle (0.04cm);   \draw[fill=white,thin](2.6667,1.8) circle (0.04cm);
        \draw[thick,blue](2.6667,1.5)to(3.0,1.5);  \draw[fill=blue,blue](2.6667,1.5) circle (0.04cm);   \draw[fill=white,thin](3.0,1.5) circle (0.04cm);

        \draw[thick,blue](3.0,2.55)to(3.3333,2.55);  \draw[fill=blue,blue](3.0,2.55) circle (0.04cm);   \draw[fill=white,thin](3.3333,2.55) circle (0.04cm);
        \draw[thick,blue](3.3333,2.4)to(3.6667,2.4);  \draw[fill=blue,blue](3.3333,2.4) circle (0.04cm);   \draw[fill=white,thin](3.6667,2.4) circle (0.04cm);
        \draw[thick,blue](3.6667,2.15)to(4.0,2.15);  \draw[fill=blue,blue](3.6667,2.15) circle (0.04cm);   \draw[fill=white,thin](4.0,2.15) circle (0.04cm);
       
        \draw[thick,blue](4.0,3.3)to(4.3333,3.3);  \draw[fill=blue,blue](4.0,3.3) circle (0.04cm);   \draw[fill=white,thin](4.3333,3.3) circle (0.04cm);
        \draw[thick,blue](4.3333,3.7)to(4.6667,3.7);  \draw[fill=blue,blue](4.3333,3.7) circle (0.04cm);   \draw[fill=white,thin](4.6667,3.7) circle (0.04cm);
        \draw[thick,blue](4.6667,3.5)to(5.0,3.5);  \draw[fill=blue,blue](4.6667,3.5) circle (0.04cm);   \draw[fill=white,thin](5.0,3.5) circle (0.04cm);
        
        \draw[thin,blue](5.0,4.5)to(5.3333,4.5);  \draw[fill=blue,blue](5.0,4.5) circle (0.04cm);   \draw[fill=white,thin](5.3333,4.5) circle (0.04cm);
        \draw[thin,blue](5.3333,4.7)to(5.6667,4.7);  \draw[fill=blue,blue](5.3333,4.7) circle (0.04cm);   \draw[fill=white,thin](5.6667,4.7) circle (0.04cm);
        \draw[thin,blue](5.6667,4.2)to(6.0,4.2);  \draw[fill=blue,blue](5.6667,4.2) circle (0.04cm);   \draw[fill=white,thin](6.0,4.2) circle (0.04cm);

        \draw[thick,blue](6.0,5.65)to(6.3333,5.65);  \draw[fill=white,thin](6.0,5.65) circle (0.04cm);   \draw[fill=blue,blue,thin](6.3333,5.65) circle (0.04cm);
        \draw[thick,blue](6.3333,5.3)to(6.6667,5.3);  \draw[fill=white,thin](6.3333,5.3) circle (0.04cm);   \draw[fill=blue,blue,thin](6.6667,5.3) circle (0.04cm);
        \draw[thick,blue](6.6667,5.15)to(7.0,5.15);  \draw[fill=white,thin](6.6667,5.15) circle (0.04cm);   \draw[fill=blue,blue,thin](7.0,5.15) circle (0.04cm);

        \draw[thick,blue](7.0,4.7)to(7.3333,4.7);  \draw[fill=white,thin](7.0,4.7) circle (0.04cm);   \draw[fill=blue,blue,thin](7.3333,4.7) circle (0.04cm);
        \draw[thick,blue](7.3333,4.85)to(7.6667,4.85);  \draw[fill=white,thin](7.3333,4.85) circle (0.04cm);   \draw[fill=blue,blue,thin](7.6667,4.85) circle (0.04cm);
        \draw[thick,blue](7.6667,4.5)to(8.0,4.5);  \draw[fill=white,thin](7.6667,4.5) circle (0.04cm);   \draw[fill=blue,blue,thin](8.0,4.5) circle (0.04cm);

        \draw[thick,blue](8.0,4.2)to(8.3333,4.2);  \draw[fill=white,thin](8.0,4.2) circle (0.04cm);   \draw[fill=blue,blue,thin](8.3333,4.2) circle (0.04cm);
        \draw[thick,blue](8.3333,3.85)to(8.6667,3.85);  \draw[fill=white,thin](8.3333,3.85) circle (0.04cm);   \draw[fill=blue,blue,thin](8.6667,3.85) circle (0.04cm);
        \draw[thick,blue](8.6667,4.05)to(9.0,4.05);  \draw[fill=white,thin](8.6667,4.05) circle (0.04cm);   \draw[fill=blue,blue,thin](9.0,4.05) circle (0.04cm);

        \draw[thick,blue](9.0,3.4)to(9.3333,3.4);  \draw[fill=white,thin](9.0,3.4) circle (0.04cm);   \draw[fill=blue,blue,thin](9.3333,3.4) circle (0.04cm);
        \draw[thick,blue](9.3333,3.6)to(9.6667,3.6);  \draw[fill=white,thin](9.3333,3.6) circle (0.04cm);   \draw[fill=blue,blue,thin](9.6667,3.6) circle (0.04cm);
        \draw[thick,blue](9.6667, 3.3)to(10.0,3.3);  \draw[fill=white,thin](9.6667,3.3) circle (0.04cm);   \draw[fill=blue,blue,thin](10.0, 3.3) circle (0.04cm);

        \draw[thick,blue](10.0,2.9)to(10.3333,2.9);  \draw[fill=white,thin](10.0,2.9) circle (0.04cm);   \draw[fill=blue,blue,thin](10.3333,2.9) circle (0.04cm);
        \draw[thick,blue](10.3333,2.6)to(10.6667,2.6);  \draw[fill=white,thin](10.3333,2.6) circle (0.04cm);   \draw[fill=blue,blue,thin](10.6667,2.6) circle (0.04cm);
        \draw[thick,blue](10.6667, 2.8)to(11.0,2.8);  \draw[fill=white,thin](10.6667,2.8) circle (0.04cm);   \draw[fill=blue,blue,thin](11.0, 2.8) circle (0.04cm);
        
        \draw[thick,blue](11.0,2.1)to(11.3333,2.1);  \draw[fill=white,thin](11.0,2.1) circle (0.04cm);   \draw[fill=blue,blue,thin](11.3333,2.1) circle (0.04cm);
        \draw[thick,blue](11.3333,2.3)to(11.6667,2.3);  \draw[fill=white,thin](11.3333,2.3) circle (0.04cm);   \draw[fill=blue,blue,thin](11.6667,2.3) circle (0.04cm);
        \draw[thick,blue](11.6667, 2.15)to(12.0,2.15);  \draw[fill=white,thin](11.6667,2.15) circle (0.04cm);   \draw[fill=blue,blue,thin](12.0, 2.15) circle (0.04cm);        
        
        \draw[thick,blue](12.0,1.75)to(12.3333,1.75);  \draw[fill=white,thin](12.0,1.75) circle (0.04cm);   \draw[fill=blue,blue,thin](12.3333,1.75) circle (0.04cm);
        \draw[thick,blue](12.3333,1.5)to(12.6667,1.5);  \draw[fill=white,thin](12.3333,1.5) circle (0.04cm);   \draw[fill=blue,blue,thin](12.6667,1.5) circle (0.04cm);
        \draw[thick,blue](12.6667, 1.35)to(13.0,1.35);  \draw[fill=white,thin](12.6667, 1.35) circle (0.04cm);   \draw[fill=blue,blue,thin](13.0, 1.35) circle (0.04cm);              

        \draw[thick,blue](13.0,0.8)to(13.3333,0.8);  \draw[fill=white,thin](13.0,0.8) circle (0.04cm);   \draw[fill=blue,blue,thin](13.3333,0.8) circle (0.04cm);
        \draw[thick,blue](13.3333,1.1)to(13.6667,1.1);  \draw[fill=white,thin](13.3333,1.1) circle (0.04cm);   \draw[fill=blue,blue,thin](13.6667,1.1) circle (0.04cm);
        \draw[thick,blue](13.6667, 0.9)to(14.0,0.9);  \draw[fill=white,thin](13.6667, 0.9) circle (0.04cm);   \draw[fill=blue,blue,thin](14.0, 0.9) circle (0.04cm);     
        
        \draw[thick,blue](14.0,0.75)to(14.3333,0.75);  \draw[fill=white,thin](14.0,0.75) circle (0.04cm);   \draw[fill=blue,blue,thin](14.3333,0.75) circle (0.04cm);
        \draw[thick,blue](14.3333,0.65)to(14.6667,0.65);  \draw[fill=white,thin](14.3333,0.65) circle (0.04cm);   \draw[fill=blue,blue,thin](14.6667,0.65) circle (0.04cm);
        \draw[thick,blue](14.6667, 0.5)to(15.0,0.5);  \draw[fill=white,thin](14.6667, 0.5) circle (0.04cm);   \draw[fill=blue,blue,thin](15.0, 0.5) circle (0.04cm);             

        \draw[thick,red](1,0)to(2,0);  \draw[fill=red,red](1,0) circle (0.04cm);   \draw[fill=white,thin](2,0) circle (0.04cm);
        \draw[thick,red](2,1)to(3,1);  \draw[fill=red,red](2,1) circle (0.04cm);   \draw[fill=white,thin](3,1) circle (0.04cm);
        \draw[thick,red](3,2)to(4,2); \draw[fill=red,red](3,2) circle (0.04cm);   \draw[fill=white,thin](4,2) circle (0.04cm);
        \draw[thick,red](4,3)to(5,3);  \draw[fill=red,red](4,3) circle (0.04cm);   \draw[fill=white,thin](5,3) circle (0.04cm);
        \draw[thick,red](5,4)to(6,4); \draw[fill=red,red](5,4) circle (0.04cm);   \draw[fill=white,thin](6,4) circle (0.04cm);
        \draw[thick,red](6,5)to(7,5);  \draw[fill=white,thin](6,5) circle (0.04cm);   \draw[fill=red,red,thin](7,5) circle (0.04cm);
        \draw[thick,red](7,4.375)to(8,4.375);  \draw[fill=white,thin](7,4.375) circle (0.04cm);   \draw[fill=red,red,thin](8,4.375) circle (0.04cm);
        \draw[thick,red](8,3.75)to(9,3.75);  \draw[fill=white,thin](8,3.75) circle (0.04cm);   \draw[fill=red,red,thin](9,3.75) circle (0.04cm);
        \draw[thick,red](9,3.1253)to(10,3.1253);  \draw[fill=white,thin](9,3.1253) circle (0.04cm);   \draw[fill=red,red,thin](10,3.1253) circle (0.04cm);
        \draw[thick,red](10,2.5)to(11,2.5);   \draw[fill=white,thin](10,2.5) circle (0.04cm);   \draw[fill=red,red,thin](11,2.5) circle (0.04cm);
        \draw[thick,red](11,1.875)to(12,1.875);  \draw[fill=white,thin](11,1.875) circle (0.04cm);   \draw[fill=red,red,thin](12,1.875) circle (0.04cm);
        \draw[thick,red](12,1.25)to(13,1.25);  \draw[fill=white,thin](12,1.25) circle (0.04cm);   \draw[fill=red,red,thin](13,1.25) circle (0.04cm);
        \draw[thick,red](13,0.625)to(14,0.625);  \draw[fill=white,thin](13,0.625) circle (0.04cm);   \draw[fill=red,red,thin](14,0.625) circle (0.04cm);
        \draw[thick,red](14,0)to(15,0);   \draw[fill=white,thin](14,0) circle (0.04cm);   \draw[fill=red,red,thin](15,0) circle (0.04cm);
        
        \draw [
            thick,
            decoration={
                brace,
                mirror,
                raise=0.1cm
            },
            decorate
        ] (1.6667,0)--(2,0) node [pos=0.5,anchor=north,yshift=-0.3cm] {$\delta$}; 
    \end{tikzpicture}
    \caption{A depiction of the upper bound $L_k$ (black diagonal lines) and lower bound (longer red steps) for $f_k$ (shorter blue steps).}
    \label{f:upperlowerbounds}
\end{figure}

Now that we have obtained upper and lower bounds for $S_k$, we normalize $L_k$ so it is a probability distribution  over $[0,1]$ by applying the invertible affine transformations $T_k: [a_k,b_k] \to [0,1]$ given by
\begin{equation*}
T_k(x) = \frac{x-a_k}{b_k-a_k}
\end{equation*}
and dividing by the normalization factor 
\begin{equation*}
\frac{\delta |\mathsf Z_S(ks)|}{b_k - a_k}
\end{equation*}
so the resulting function is nonnegative and integrates to $1$ over $[0,1]$.
The image under each $T_k$ of the mode factorization length~$c_k$ is the \emph{fulcrum constant}
\begin{equation}\label{eq:fulcrum}
F = \frac{n_1(n_3-n_2)}{n_2(n_3-n_1)}.
\end{equation}

In order to make things more precise, we require Big-$O$ and Big-$\Theta$ notation.  For two real-valued functions $f$ and $g$, we say that $f(x) = O(g(x))$ if there is a constant $C > 0$ such that $|f(x)| \le O(g(x))$ for sufficiently large $x$.  We say that $f(x) = \Theta(g(x))$ if there are constants $C_1,C_2$ so that 
$C_1|g(x)| \leq |f(x)| \leq C_2|g(x)|$
for sufficiently large $x$.

The linear component of $L_k$ on $[a_k, c_k]$ is mapped to the line
\begin{equation*} 
\frac{2}{F} x + \frac{b_k-a_k}{\delta|\mathsf Z_S(ks)|} = \frac{2}{F}x + \Theta(1/k)
\end{equation*}
on $[0,F]$ since $|\mathsf Z_S(ks)| = \Theta(k^2)$ by \cite[Theorem~3.9]{FIHF} and $b_k - a_k = \Theta(k)$ by~\eqref{eq:akbk}.  An analogous computation on the remaining linear component demonstrates that the normalized $L_k$ converges uniformly to the piecewise linear function
\begin{equation*}
L(x) = 
\begin{cases}
\dfrac{2}{F}x & \text{if $0 \leq x \leq F$}, \\
- \dfrac{2}{1-F}x + \dfrac{2}{1-F} & \text{if $F < x \leq 1$},
\end{cases} 
\end{equation*}
depicted in Figure~\ref{Figure:TikzL}, a standard triangular distribution over the interval $[0,1]$.  The properties of a triangular distribution are well known \cite{TRBook}; in this case,
\begin{equation}\label{eq:trimean}
\mu_L = \int_0^1 xL(x) \,dx = \frac{1+F}{3}
\end{equation}
and 
\begin{equation}\label{eq:trimed}
\eta_L = \begin{cases}
1 - \sqrt{\dfrac{1-F}{2}} & \text{ if } F \leq \frac{1}{2}, \\[10pt]
\sqrt{\dfrac{F}{2}} & \text{ if } F \geq \frac{1}{2}.
\end{cases} 
\end{equation}
The normalized $S_k$ are bounded above by the normalized $L_k$ by Lemma~\ref{l:upperbound}, and, as $|L_k - S_k|$ is bounded independent of $k$ by Lemma~\ref{l:lowerbound}, the difference between the normalized $S_k$ and $L_k$ tends to $0$. In particular, the normalized $S_k$ converge uniformly to $L$.  

\begin{figure}
    \centering
    \begin{tikzpicture}[thick,scale=0.5, every node/.style={scale=0.8}]
        \draw[->,thin](1,0)--(15.5,0);
        \draw[->,thin](1,0)--(1,5.5);
      
        \draw[thick,black](1,0)to(6,5);
        \draw[thick,black](6,5)to(15,0);
        \draw[fill=black](1,0) circle (.05cm);
        \draw[fill=black](6,5) circle (.05cm);
        \draw[fill=black](15,0) circle (.05cm);

        \draw[thin] (1,0)node[left, yshift=-0.25cm]{$0$}--(1.15,0);
        \draw[thin] (0.85,2.5)node[left]{$1$}--(1.15,2.5);
        \draw[thin] (0.85,5)node[left]{$2$}--(1.15,5);

        \draw[thin] (6,-0.15)node[below]{$F$}--(6,0.15);
        \draw[thin,dashed] (6,0)--(6,5);
        
        \draw[thin] (15,-0.15)node[below]{$1$}--(15,0.15);
    \end{tikzpicture}
    \caption{The function $L(x)$.}
    \label{Figure:TikzL}
\end{figure}
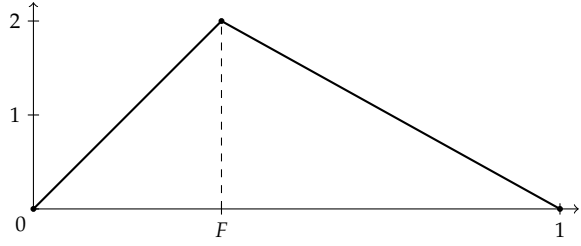

\section{Proving Theorems~\ref{t:mean} and~\ref{t:median}}
\label{sec:lastproofs}

In Section~\ref{sec:approx}, we obtained an asymptotic description of the length multiset $\Le\multi{n}$ for elements of the form $n = ks = k \delta t n_1 n_2 n_3$.  We begin by proving that the convergence of $\mu(n)/n$ and $\eta(n)/n$ for this subsequence implies convergence of the whole sequence (Lemma~\ref{l:subseqconv}).  Once this is done, we prove Theorems~\ref{t:mean} and~\ref{t:median}.  


\begin{Lemma}\label{l:subseqconv}
If $S = \<n_1, n_2, n_3\>$ is a numerical semigroup and $s = \delta tn_1n_2n_3$, then
\begin{equation*} 
L = \lim_{n \to \infty} \frac{\mu(n)}{n} \text{ exists }
\quad \text{if and only if} \quad
L' = \lim_{k \to \infty} \frac{\mu(sk)}{sk} \text{ exists. }
\end{equation*} 
If either limit exists (equivalently, both limits exist), then they converge to the same limit.  The same holds with~$\eta$ in place of~$\mu$.  
\end{Lemma}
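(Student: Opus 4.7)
The forward direction is immediate, since the convergence of $\mu(n)/n$ passes to any subsequence. For the converse, suppose $L' = \lim_{k \to \infty} \mu(sk)/sk$ exists. Every sufficiently large $n \in S$ can be written uniquely as $n = sk + r$ with $k = \lfloor n/s \rfloor$ and $r \in \{0, 1, \ldots, s-1\}$, and since $S$ has finite complement in $\NN$, for each residue $r$ we have $sk + r \in S$ for all $k$ large enough. It therefore suffices to show $\mu(sk + r)/(sk + r) \to L'$ for each fixed $r$ in this finite range; combining over the finitely many residues then gives convergence of the full sequence $\mu(n)/n$.

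The plan is to rerun the continuous approximation argument of Section~\ref{sec:approx} with $sk + r$ in place of $sk$. Define $a_{k,r} = \mm(sk+r)$, $b_{k,r} = \MM(sk+r)$, $c_{k,r}$ the unique mode length of $sk+r$ (a singleton for large $k$ by Theorem~\ref{t:mode}), and $d_{k,r} = \nu(sk+r) - 1$. By Theorems~\ref{t:minmaxquasi} and~\ref{t:mode}, each of these differs from the corresponding quantity for $sk$ by a correction bounded uniformly in $k$. In particular, $a_{k,r}/(sk+r) \to 1/n_3$, $b_{k,r}/(sk+r) \to 1/n_1$, and $c_{k,r}/(sk+r) \to 1/n_2$, so the fulcrum constant $F$ is unchanged. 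Construct the step function $S_{k,r}$ and the piecewise-linear envelope $L_{k,r}$ with anchors $(a_{k,r}, 1)$, $(c_{k,r}, d_{k,r}+1)$, $(b_{k,r}, 1)$, exactly as in Section~\ref{sec:approx}. The proofs of Lemmas~\ref{l:upperbound} and~\ref{l:lowerbound} go through with only cosmetic changes: the identity $sk = a_k n_3$ becomes $sk + r = a_{k,r} n_3 + O(1)$, and the resulting $O(1)$ terms are absorbed into the constant $C$ in Lemma~\ref{l:lowerbound}.

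After the affine normalization $T_{k,r}\colon[a_{k,r}, b_{k,r}] \to [0,1]$ and rescaling by $\delta|\mathsf Z_S(sk+r)|/(b_{k,r} - a_{k,r})$ (which is again $\Theta(k)$), the normalized version of $S_{k,r}$ converges uniformly to the same triangular distribution $L$ on $[0,1]$ as before. Computing the first moment yields $\mu(sk+r)/(sk+r) \to L'$. The argument for $\eta$ is analogous: since $L$ has continuous density and a unique median $\eta_L$, uniform convergence implies convergence of the medians, giving the same limit as for $r = 0$. Since each residue-$r$ subsequence converges to $L'$ and there are only finitely many residues, the full sequences converge to $L'$.

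The main obstacle is verifying that the estimates of Section~\ref{sec:approx} persist under the shift by $r$ uniformly in $k$. The corrections are all $O(1)$, so they do not affect the limiting distribution $L$ or the fulcrum $F$, but tracking the constants carefully through each step (especially through the affine normalization, where both numerator and denominator acquire bounded shifts) requires some bookkeeping.
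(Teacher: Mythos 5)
Your overall strategy---rerunning the Section~\ref{sec:approx} machinery for each residue class $sk+r$---is genuinely different from the paper's proof, which never revisits the step-function construction: it fixes $a \in \mathsf L(r)$, injects $\Le\multi{ks} \hookrightarrow \Le\multi{ks+r}$ via $\ell \mapsto \ell + a$, and uses $|\mathsf Z(n)| = \Theta(n^2)$, $|\mathsf Z(n+r)| - |\mathsf Z(n)| = \Theta(n)$, and $\MM(n) = O(n)$ to get $|\mu(ks+r) - \mu(ks)| = O(1)$, together with the fact that multiplicities of lengths adjacent to the median are $\Theta(k)$ (while only $\Theta(k)$ new lengths are added) to get $|\eta(ks+r) - \eta(ks)| = O(1)$; dividing by $ks + r = \Theta(k)$ finishes. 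Your route could in principle be made to work, but as written it contains a genuine gap.

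The gap: you assert that $c_{k,r}$ is ``a singleton for large $k$ by Theorem~\ref{t:mode}.'' That theorem only gives $\gamma(n+t) = \gamma(n) + t/n_2$, so the cardinality $|\gamma(n)|$ is \emph{constant} along each residue class modulo $t$; it never forces the mode set to collapse to a point. Indeed, in $\<6,9,20\>$ the paper notes $\gamma(1001)$ has eight elements, hence so does $\gamma(1001 + mt)$ for every $m$, so for many residues $r$ the anchor $(c_{k,r}, d_{k,r}+1)$ around which you build $L_{k,r}$ and the switch point of the step function $S_{k,r}$ are simply not defined as you describe them. Relatedly, ``only cosmetic changes'' to Lemmas~\ref{l:upperbound} and~\ref{l:lowerbound} understates the work: those proofs exploit the exact identities $ks = a_k n_3$ and $c_k = ks/n_2$ (the extreme and mode factorizations of $ks$ are the pure ones $(0,0,a_k)$, $(0,c_k,0)$, $(b_k,0,0)$), and the lower bound is an induction whose step manufactures a new factorization of the same element; with a nonzero residue these become only $O(1)$-approximate, and one must actually check that the induction still closes and that the two linear pieces of $L_{k,r}$, anchored at points known only up to bounded shifts, still pinch $S_{k,r}$ within a $k$-independent band. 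All of this is repairable (e.g., take the largest mode length as the switch point and absorb every $O(1)$ discrepancy before normalizing by factors of size $\Theta(k)$), but the proposal as written leans on a false intermediate claim and defers precisely the verifications that carry the content; the paper's direct $O(1)$-comparison of $\mu(ks+r)$, $\eta(ks+r)$ with $\mu(ks)$, $\eta(ks)$ is the cheaper way to reduce to the subsequence $ks$.
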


\begin{proof} 
If $L$ exists, then every subsequence converges to the same limit, so $L' = L$.  Conversely, suppose $L'$ exists.  Since $S$ has finite complement, every $x \in S$ can be written as $x = ks + r$ for some integer $k \ge 0$ and $r \in S$ by \cite[Lemma~1.6]{NSBook}.  In~order to prove $L = L'$, it suffices to show that for fixed $r \in S$, 
\begin{equation}\label{eq:limcomp}
\lim_{k \to \infty} \frac{\mu(ks + r)}{ks + r} = \lim_{k \to \infty} \frac{\mu(ks)}{ks}.
\end{equation}
Given any $a \in \mathsf L(r)$, there is an injective map
\begin{equation*}
\begin{array}{rcl}
\Le\multi{ks} &\hookrightarrow& \Le\multi{ks + r} \\
\ell &\mapsto& \ell + a.
\end{array}
\end{equation*}
Write $\Le\multi{ks + r} = \{\!\!\{\ell_1, \ldots, \ell_M\}\!\!\}$ with $\ell_1, \ldots, \ell_m$ lying in the image of the above map and $\ell_{m+1}, \ldots, \ell_{M}$ lying outside the image; in particular, $M = |\mathsf Z(ks + r)|$ and $m = |\mathsf Z(ks)|$.  Using the facts that 
\begin{equation}\label{eq:Mmtheta}
|\mathsf Z(n)| = \Theta(n^2) \qquad \text{ and } \qquad |\mathsf Z(n + r)| - |\mathsf Z(n)| = \Theta(n)
\end{equation}
for fixed $r$ by \cite[Theorem~3.9]{FIHF} and that each element of $\Le\multi{ks}$ is $O(k)$ by Theorem~\ref{t:minmaxquasi}, we obtain $M = \Theta(k^2)$, $m = \Theta(k^2)$, $M - m = \Theta(k)$, and
\begin{align*}
|\mu(ks + r) - \mu(ks)|
&= \left|  \frac{1}{M} \sum_{i=1}^{M} \ell_i - \frac{1}{m} \sum_{i=1}^{m} (\ell_i - a) \right| \\
&= \left| \left(\frac{1}{M} - \frac{1}{m}\right) \sum_{i=1}^{m} \ell_i + \frac{1}{M} \sum_{i=m+1}^{M} \ell_i + a \right| \\
&\le \left| \left(\frac{1}{M} - \frac{1}{m}\right) \sum_{i=1}^{m} \ell_i \right| + \frac{1}{M} \sum_{i=m+1}^{M} \ell_i + a \\
&= O\big(\tfrac{1}{k^3}\big)O(k^2)O(k) + O\big(\tfrac{1}{k^2}\big)O(k) O(k) + O(1) \\
&= O(1)
\end{align*}
as $k \to \infty$.  

Now, the median value of $\{\!\!\{\ell_1,\ldots,\ell_m\}\!\!\}$ can shift by at most $M - m$ elements (counting multiplicity) upon including the lengths $\{\!\!\{\ell_{m+1},\ldots,\ell_M\}\!\!\}$.  
Lemmas~\ref{l:upperbound} and~\ref{l:lowerbound} provide upper and lower bounds for the multiplicities $f_k(x)$, respectively, that are $O(k)$, so each length has multiplicity $O(k)$ as well.  Moreover, $\Le\multi{ks}$ limits to a triangular distribution, so since $|\Le\multi{ks}| = \Theta(k^2)$, the lengths adjacent to $\eta(ks)$ on either side (which are all fixed distance $\delta$ appart) must have multiplicity $\Theta(k)$ (in particular, not just $O(k)$).  As such, since~$M - m = \Theta(k)$ by~\eqref{eq:Mmtheta}, 
\begin{equation*}
|\eta(ks + r) - \eta(ks)| = \frac{\Theta(k)}{\Theta(k)} \delta = O(1),
\end{equation*}
at which point we conclude that \eqref{eq:limcomp} holds for fixed $r$ for both $\eta$ and $\mu$.  
\end{proof}

Using the continuous probability distribution function $L$, we are now ready to compute the limits of $\mu(n)/n$ and $\eta(n)/n$ as $n \to \infty$, completing the proofs of Theorems~\ref{t:mean} and~\ref{t:median}, respectively.  

\begin{proof}[Proof of Theorem~\ref{t:mean}]
Uniform convergence of the normalized $S_k$ and~\eqref{eq:trimean} yield
\begin{equation*}
\lim_{k \to \infty} \int_0^1 xS_k(x) \,dx = \int_0^1 xL(x) \,dx = \frac{1+F}{3},
\end{equation*}
and upon inverting each transformation~$T_k$, we obtain 
\begin{equation*}
\tfrac{1}{3} T_k^{-1}(1 + F) = \tfrac{1}{3} \big[(1 + F)(b_k - a_k) + a_k\big] = \tfrac{1}{3} (a_k + b_k + c_k)
\end{equation*}
as the mean of the original distributions. 
Lemma~\ref{l:subseqconv} now implies
\begin{equation*}
\lim_{n \to \infty} \frac{\mu(n)}{n} = \lim_{k \to \infty} \frac{\mu(ks)}{ks} = \frac{1}{3} \left( \frac{1}{n_1} + \frac{1}{n_2} + \frac{1}{n_3} \right ),
\end{equation*}
which completes the proof.  
\end{proof}


\begin{proof}[Proof of Theorem~\ref{t:median}]
Applying~\eqref{eq:trimed}, first suppose $F \ge \frac{1}{2}$.  
Computing the preimage under $T_k$ and applying Lemma~\ref{l:subseqconv} yields
\begin{equation*}
\begin{split} 
\lim_{k \to \infty} \frac{\eta(ks)}{ks} & = \lim_{k \to \infty} T_k^{-1} \left(\sqrt{\frac{F}{2}}\right) \frac{1}{ks} = \lim_{k \to \infty} \frac{b_k - a_k}{ks} \sqrt{\frac{F}{2}} + \frac{a_k}{ks} \\ 
& = \frac{1}{n_1} \sqrt{\frac{F}{2}} + \frac{1}{n_3}\left(1 - \sqrt{\frac{F}{2}}\right).
\end{split}
\end{equation*}
The case $F \le \frac{1}{2}$ is similar.  

The case $F = \frac{1}{2}$ warrants special attention. For such numerical semigroups, the above demonstrated equalities reveal
\begin{equation*}
\gamma(ks) = \{\mu(ks)\} = \{\eta(ks)\} = \big\{ks/n_2\big\},
\end{equation*}
from which we obtain~\eqref{eq:everything2}.
\end{proof}

\section{Realizable median constants}
\label{sec:realizemedian}

The potentially irrational quantities that appear in the formula~\eqref{eq:medianasymptotic} for the asymptotic median factorization length raise interesting questions.  Is it possible for~\eqref{eq:medianasymptotic} to be rational?  What sort of quadratic irrational numbers can be obtained?  We provide partial answers here, exhibiting (i) infinitely many numerical semigroups whose median constant is rational (Theorem~\ref{t:rationalmedian}) and (ii) infinitely many numerical semigroups whose median constant is an irrational element of $\QQ(\sqrt{d})$, in which $d$ is any square-free positive integer (Theorem~\ref{t:irrationalmedian}).  Both constructions consist of numerical semigroups minimally generated by arithmetic sequences, a family of semigroups known in the literature for being particularly well-behaved \cite{setsoflength,diophantine,catenaryarith}.  

\begin{Theorem}\label{t:rationalmedian}
There are infinitely many minimally generated numerical semigroups 
semigroups $S = \langle n_1,n_2,n_3 \rangle$ so that $\eta(n)/n$ tends to a rational number.
\end{Theorem}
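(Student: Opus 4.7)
The plan is to specialize Theorem~\ref{t:median} to numerical semigroups generated by three-term arithmetic progressions, where the fulcrum constant $F$ takes a particularly simple form.  Writing $S = \<a, a+d, a+2d\>$ with $\gcd(a,d) = 1$, one computes directly that
\begin{equation*}
F = \frac{a \cdot d}{(a+d) \cdot 2d} = \frac{a}{2(a+d)},
\qquad
\frac{1-F}{2} = \frac{a+2d}{4(a+d)}.
\end{equation*}
In particular $F < 1/2$ whenever $d > 0$, so the first branch of~\eqref{eq:medianasymptotic} applies.  Since $1/a$ and $1/(a+2d)$ are already rational, the limit $\eta(n)/n$ is rational exactly when $\sqrt{(a+2d)/(a+d)}$ is rational, i.e.\ precisely when $(a+2d)/(a+d)$ is the square of a rational number.

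The cleanest way to enforce this is to take $a + d$ and $a + 2d$ to both be perfect squares.  Setting $a + d = r^2$ and $a + 2d = s^2$ with $\gcd(r,s) = 1$ and $r < s$ forces $a = 2r^2 - s^2$ and $d = s^2 - r^2$.  I would then specialize further to $s = r + 1$, $r \geq 4$, producing the explicit family
\begin{equation*}
S_r = \<\,r^2 - 2r - 1,\ r^2,\ (r+1)^2\,\>, \qquad d = 2r+1.
\end{equation*}
Three items need to be verified: (i) $\gcd(n_1, n_2, n_3) = 1$, (ii) $n_1, n_2, n_3$ minimally generate $S_r$, and (iii) the asymptotic median is rational and the $S_r$ are pairwise distinct.

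For (i) it suffices to show $\gcd(n_1, d) = 1$, which follows from the one-line identity $4 n_1 = (2r-2)(2r+1) - (6r+2)$ together with $6r+2 = 3(2r+1) - 1$, giving $4 n_1 \equiv 1 \pmod{2r+1}$ and hence $\gcd(n_1, 2r+1) = 1$.  For (ii) I would invoke the standard elementary fact that for any arithmetic-progression semigroup $\<a, a+d, a+2d\>$ with $\gcd(a,d) = 1$ and $a \geq 3$, the three listed generators are minimal: the relations $a + d = \alpha a + \beta(a+2d)$ and $a + 2d = \alpha a + \beta(a+d)$ in nonnegative integers are ruled out by a brief case analysis on $\beta$, using only $\gcd(a,d) = 1$ and $a \geq 3$.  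Since $n_1 = r^2 - 2r - 1 \geq 7$ for $r \geq 4$, this applies to every $S_r$ in the family.

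For (iii), substituting $\sqrt{(1-F)/2} = (r+1)/(2r)$ into Theorem~\ref{t:median} yields
\begin{equation*}
\lim_{n \to \infty} \frac{\eta(n)}{n} = \frac{1}{r^2 - 2r - 1} \cdot \frac{r-1}{2r} + \frac{1}{(r+1)^2} \cdot \frac{r+1}{2r} = \frac{r-1}{2r(r^2 - 2r - 1)} + \frac{1}{2r(r+1)} \in \QQ.
\end{equation*}
The smallest generator $n_1 = r^2 - 2r - 1$ is strictly increasing in $r$, so the $S_r$ are pairwise distinct, producing the required infinite family.  The only mild obstacle is the minimal-generation check, but it reduces to routine divisibility bookkeeping once one has $\gcd(a,d) = 1$ and $a \geq 3$; everything else is direct substitution into Theorem~\ref{t:median}.
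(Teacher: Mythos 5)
Your proposal is correct and follows essentially the same strategy as the paper: specialize Theorem~\ref{t:median} to a three-term arithmetic progression $\langle a, a+d, a+2d\rangle$ with $F<\tfrac12$ and engineer the two largest generators to be perfect squares so that the radicand becomes the square of a rational, then confirm minimal generation of the AP family. The only differences are cosmetic: the paper realizes this via Pythagorean triples $\langle a^2-b^2,\, a^2,\, c^2\rangle$ and cites the literature for minimal generation, whereas you take the (slightly simpler) consecutive-square family $\langle r^2-2r-1,\, r^2,\, (r+1)^2\rangle$ and verify minimality and coprimality by a direct, correct case analysis.
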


\begin{proof}
Let $a,b,c$ denote a primitive Pythagorean triple; that is, $\gcd(a,b,c) = 1$ and $a^2 + b^2 = c^2$.  It is well-known that there are infinitely many such triples
and that $a,b \geq 3$.  Suppose that $a > b$.  Let
\begin{equation*}
n_1 = a^2 - b^2 , \qquad n_2 = a^2, \qquad n^3 = a^2 + b^2 = c^2,
\end{equation*}
and $S = \<n_1, n_2, n_3\>$.
Then the fulcrum constant is
\begin{equation*}
F = \dfrac{n_1(n_3 - n_2)}{n_2(n_3 - n_1)}
= \frac{(a^2 - b^2)b^2}{a^2(2b^2)}
= \frac{a^2 - b^2}{2a^2} < \frac{1}{2}.
\end{equation*}
Thus, we are in the first case of Theorem~\ref{t:median}. To show that the limit of $\eta(n)/n$ is rational, 
we verify that the expression 
\begin{equation*}
\frac{(n_2 - n_1)(n_3 - n_1)}{2 n_2 n_3}
= \frac{b^2(2b^2)}{2a^2 c^2}
= \frac{b^4}{a^2 c^2}
= \left( \frac{b^2}{ac} \right)^2
\end{equation*}
under the radical in~\eqref{eq:medianasymptotic} is the square of a rational number.  

To complete the proof, notice that $n_1, n_2, n_3$ is a minimal generating set since it forms an arithmetic sequence whose minimal element $a^2 - b^2$ is relatively prime to the step size $b^2$ \cite{setsoflength}.  
\end{proof}


It is a folklore theorem that for any $\alpha \in \RR \setminus \ZZ$, the sequence
$\lfloor n \alpha \rfloor$ contains infinitely many prime numbers.  According to \cite[p.~240]{Harman},
this was first observed either by Heilbronn, as asserted by Vinogradov \cite[p.~180]{Vinogradov}, or by Fogels,
as suggested by Leitmann and Wolke \cite{Leitmann}.
In particular, the number of primes at most $x$
of the form $\lfloor n \alpha \rfloor$ is asymptotic to $\pi(x)/\alpha$, in which $\pi(x)$ denotes the number of primes at most $x$.  
For algebraic $\alpha$, one
can be more explicit:  the counting function is $\pi(x)/\alpha + O(x^{0.875+\epsilon})$ for every $\epsilon > 0$, in which
the implied constant depends on $\alpha$ and $\epsilon$ \cite{Begunts}.

\begin{Theorem}\label{t:irrationalmedian}
If $d \geq 2$ is a square-free positive integer, then there 
are infinitely many minimally generated numerical
semigroups $S = \langle n_1,n_2,n_3 \rangle$ so that
$\eta(n)/n$ tends to an irrational element of $\QQ(\sqrt{d})$.
\end{Theorem}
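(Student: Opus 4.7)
The strategy mirrors that of Theorem~\ref{t:rationalmedian}. For any arithmetic progression $n_1 < n_2 < n_3$, a direct computation gives
\[
F \;=\; \frac{n_1}{2n_2} \;<\; \frac{1}{2}
\qquad\text{and}\qquad
\sqrt{\tfrac{1-F}{2}} \;=\; \tfrac{1}{2}\sqrt{n_3/n_2},
\]
placing us in the first case of Theorem~\ref{t:median}. The plan is to engineer $n_3/n_2$ to be a rational multiple of $d$, so that $\sqrt{n_3/n_2}\in\QQ(\sqrt{d})\setminus\QQ$, while keeping $(n_1,n_2,n_3)$ a minimal generating set.

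\textbf{Construction.} Taking $\alpha=\sqrt{d}$ in the prime remark, select one of the infinitely many primes $p=\lfloor n\sqrt{d}\rfloor$ for positive integers $n$, and put
\[
n_2=p^2,\qquad n_3=dn^2,\qquad n_1=2p^2-dn^2,\qquad e=dn^2-p^2.
\]
The defining inequalities $p\le n\sqrt{d}<p+1$ yield $p^2\le dn^2<(p+1)^2<2p^2$ whenever $p\ge 3$, so $0<n_1<n_2<n_3$ is a genuine arithmetic progression with step $e$. By design $n_3/n_2=dn^2/p^2$, so $\sqrt{(1-F)/2}=n\sqrt{d}/(2p)$.

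\textbf{Minimality, the main obstacle.} To apply the criterion of \cite{setsoflength} used in the proof of Theorem~\ref{t:rationalmedian}, I must verify $\gcd(n_1,e)=1$. The identity
\[
\gcd(2p^2-dn^2,\,dn^2-p^2)\ \Big|\ (2p^2-dn^2)+(dn^2-p^2)\;=\;p^2
\]
confines any nontrivial common divisor to $p$, which would force $p\mid dn^2$ and hence $p\mid d$ or $p\mid n$. The first possibility is ruled out for all but the finitely many $p$ dividing $d$. For the second, writing $n=pm$ and substituting into $p=\lfloor pm\sqrt{d}\rfloor$ gives $1\le m\sqrt{d}<1+1/p$; the irrationality of $\sqrt{d}\ge\sqrt{2}$ confines this to a handful of small primes. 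The prime remark guarantees infinitely many Beatty primes of the required form survive these finite exclusions.

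\textbf{Irrationality and conclusion.} For each admissible $(p,n)$, Theorem~\ref{t:median} yields
\[
\lim_{m\to\infty}\frac{\eta(m)}{m}
\;=\;\frac{1}{n_1}+\frac{n\sqrt{d}}{2p}\left(\frac{1}{n_3}-\frac{1}{n_1}\right),
\]
a rational plus a nonzero rational multiple of $\sqrt{d}$ (nonzero because $n_1\ne n_3$). Hence the limit lies in $\QQ(\sqrt{d})\setminus\QQ$. Distinct primes $p$ give distinct values of $n_2=p^2$, producing infinitely many distinct minimally generated semigroups with the claimed property.
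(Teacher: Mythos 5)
Your proposal is correct and follows essentially the same route as the paper: your triple $(n_1,n_2,n_3)=(2p^2-dn^2,\,p^2,\,dn^2)$ with $p=\lfloor n\sqrt{d}\rfloor$ prime is exactly the paper's arithmetic progression $p^2-\ell,\,p^2,\,p^2+\ell$ with $\ell=dn^2-p^2$, invoking the same Beatty-prime remark, the same minimal-generation criterion for arithmetic sequences, and the same coprimality argument (a common divisor of $n_1$ and the step must be $p$, which is ruled out since $p\nmid d$ and $p\nmid n$ for all but finitely many admissible $p$). The only differences are presentational, e.g.\ discarding finitely many bad primes rather than demanding $p>\max\{2,d\}$ up front, and simplifying $\sqrt{(1-F)/2}=\tfrac{1}{2}\sqrt{n_3/n_2}=n\sqrt{d}/(2p)$ directly from the progression.
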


\begin{proof}
Let $d$ be a square-free positive integer and let $t \in \NN$ be so that
\begin{equation}\label{eq:pt}
p = \lfloor  t\sqrt{d} \rfloor > \max\{2,d\} 
\end{equation}
is prime.  In particular, $p > d$ and hence $p \nmid d$.
Since $d$ is not a square,
\begin{equation*}
t^2 d = p^2 + \ell
\end{equation*}
for some $\ell \geq 1$ and
\begin{equation}\label{eq:p2p2ell}
p^2 < p^2+\ell < (p+1)^2.
\end{equation}
In particular,
\begin{equation}\label{eq:ell2p}
\ell < 2p + 1.
\end{equation}

We claim that $p\nmid \ell$.
Suppose toward a contradiction that $p|\ell$.
Then $p|t^2 d$.  Since $p\nmid d$, we have $p|t^2$ and hence $p|t$ because $p$ is prime.
Therefore,
\begin{equation*}
p\sqrt{d}-1 \leq t \sqrt{d} - 1 < \lfloor  t\sqrt{d} \rfloor  = p
\end{equation*}
and hence
\begin{equation*}
p(\sqrt{d}-1) \leq 1,
\end{equation*}
which is a contradiction because both factors on the left-hand side are greater than $1$.  Therefore, $p \nmid \ell$.

Define
\begin{equation*}
n_1 = p^2 - \ell  , \qquad n_2 =p^2, \qquad n_3 = \underbrace{p^2 + \ell }_{t^2d}
\end{equation*}
and $S = \langle n_1,n_2,n_3 \rangle$.
Then the fulcrum constant is
\begin{equation*}
F = \dfrac{n_1(n_3-n_2)}{n_2(n_3-n_1)}
= \frac{(p^2-\ell )\ell }{p^2(2\ell )}
= \frac{1}{2} \cdot \frac{p^2-\ell }{p^2} < \frac{1}{2}.
\end{equation*}
Thus, we are in the first case of Theorem~\ref{t:median}.  The radical in~\eqref{eq:medianasymptotic} is
\begin{equation*}
\sqrt{\frac{(n_2-n_1)(n_3 - n_1)}{2 n_2 n_3}}
= \sqrt{ \frac{\ell (2\ell )}{2 p^2 (p^2 + \ell )} }
= \frac{\ell }{p} \frac{1}{\sqrt{p^2+\ell }} 
= \frac{\ell}{pt\sqrt{d}}
= \frac{\ell }{ptd} \sqrt{d},
\end{equation*}
which is a nonrational element of $\QQ(\sqrt{d})$ since $\ell \neq 0$.

As in Theorem~\ref{t:rationalmedian}, the semigroup in question is generated by an arithmetic sequence wth relatively prime minimal element $p^2 - \ell$ and step size $\ell$, and as such is minimally generated \cite{setsoflength}.  
\end{proof}

\section{Numerical semigroups with 4 or more generators}
\label{sec:embdim4}

We close with an example that demonstrates the difficulty in generalizing the results presented here to more general numerical semigroups.  

\begin{Example}\label{e:embdim4}
Let $S = \<4, 5, 6, 7\>$ and $n = 1680$, whose factorization length multiplicity function is plotted in Figure~\ref{f:embdim4}.  Taking common differences reveals points of inflection at $n/6$ and $n/5$, a fact that remains true empirically as $n \to \infty$.  For larger $n$, the length multiplicity function appears to fit a piecwise quadratic function with boundaries at $n/6$ and $n/5$.  On the other hand, the peak of the length multiplicity function for $n = 2520$ in $S = \<5, 7, 8, 9, 11\>$ does not occur at $n/8$, and the two visible points of inflection do not appear to occur at $n/9$ and $n/7$.  
\end{Example}

\begin{figure}
\includegraphics[width=0.45\textwidth]{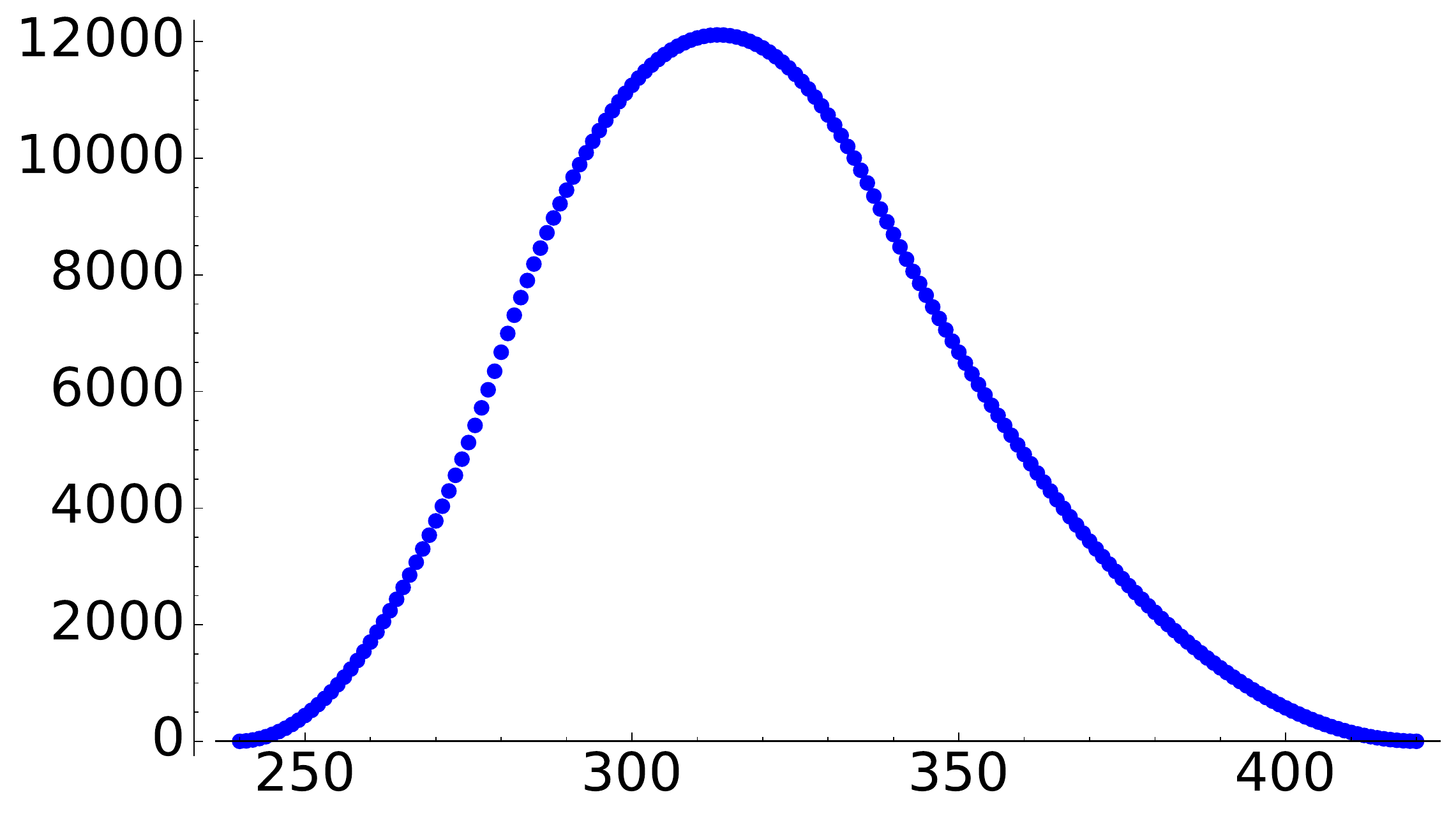}
\hspace{0.1in}
\includegraphics[width=0.45\textwidth]{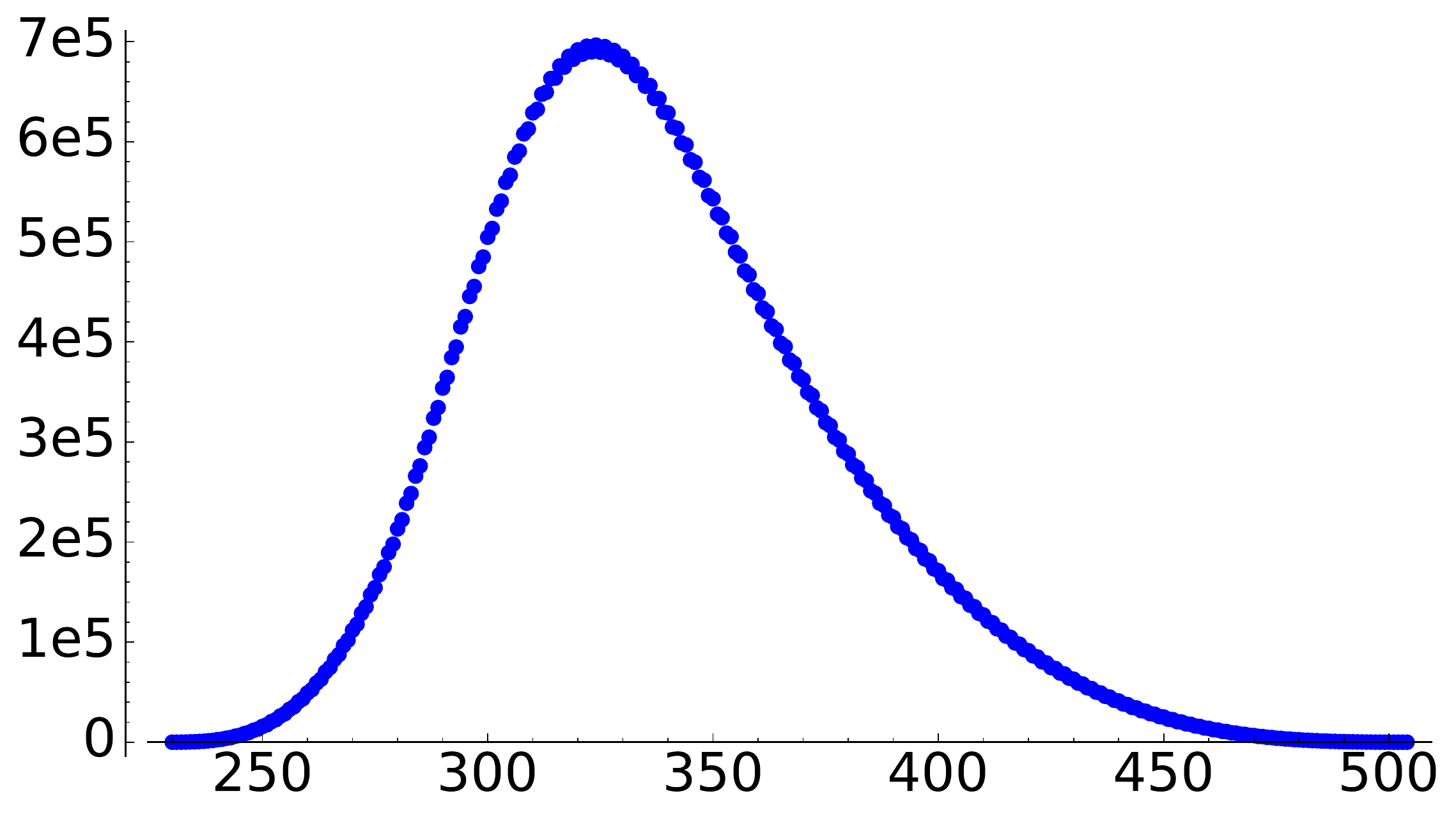}
\caption{Factorization length multiplicity functions of $1680 \in \<4, 5, 6, 7\>$ (left) and $2520 \in \<5, 7, 8, 9, 11\>$ (right) each plotted against factorization length.}
\label{f:embdim4}
\end{figure}

\section*{Acknowledgements}


Several figures in this manuscript, as well as data computed in the early stages of the project, made use of \texttt{Sage} \cite{sagemath} and the \texttt{GAP} package \texttt{numericalsgps} \cite{numericalsgpsgap}.  

\bibliography{FLD4AS1NS3G}
\bibliographystyle{amsplain}

\end{document}